\def\cite{\citet}
\numberwithin{equation}{section}
\def\@noindentfalse{\global\let\if@noindent\iffalse}
\def\@noindenttrue {\global\let\if@noindent\iftrue}
\def\@aftertheorem{%
  \@noindenttrue
  \everypar{%
    \if@noindent%
      \@noindentfalse\clubpenalty\@M\setbox\z@\lastbox%
    \else%
      \clubpenalty \@clubpenalty\everypar{}%
    \fi}}
\theoremstyle{plain}
\newtheorem{theorem}{Theorem}[section]
\newtheorem{lemma}[theorem]{Lemma}
\newtheorem{proposition}[theorem]{Proposition}
\theoremstyle{definition}
\newtheorem{remark}[theorem]{Remark}
\bf\mathversion{bold}}{\thesubsection\kern1em}{0pt}{}
\bf\mathversion{bold}}{}{0pt}{}
\def\be#1{\begin{equation*}#1\end{equation*}}
\def\ben#1{\begin{equation}#1\end{equation}}
\def\bea#1{\begin{eqnarray*}#1\end{eqnarray*}}
\def\bes#1{\begin{equation*}\begin{split}#1\end{split}\end{equation*}}
\def\besn#1{\begin{equation}\begin{split}#1\end{split}\end{equation}}
\def\bs#1{\begin{split}#1\end{split}}
\def\ba#1{\begin{align*}#1\end{align*}}
\def\ban#1{\begin{align}#1\end{align}}
\def\bg#1{\begin{gather*}#1\end{gather*}}
\def\bgn#1{\begin{gather}#1\end{gather}}
\def\bm#1{\begin{multline*}#1\end{multline*}}
\def\bmn#1{\begin{multline}#1\end{multline}}
\def\note#1{\par\smallskip%
\noindent\kern-0.01\hsize%
\setlength\fboxrule{0pt}\fbox{\setlength\fboxrule{0.5pt}\fbox{%
\llap{$\boldsymbol\Longrightarrow$ }%
\vtop{\hsize=0.98\hsize\parindent=0cm\small\rm #1}%
\rlap{$\enskip\,\boldsymbol\Longleftarrow$}
}}%
}
\def\given{\mskip 0.5mu plus 0.25mu\vert\mskip 0.5mu plus 0.15mu}
\newcounter{bracketlevel}%
\def\@bracketfactory#1#2#3#4#5#6{%
\expandafter\def\csname#1\endcsname##1{%
\global\advance\c@bracketlevel 1\relax%
\global\expandafter\let\csname @middummy\alph{bracketlevel}\endcsname\given%
\global\def\given{\mskip#5\csname#4\endcsname\vert\mskip#6}\csname#4l\endcsname#2##1\csname#4r\endcsname#3%
\global\expandafter\let\expandafter\given\csname @middummy\alph{bracketlevel}\endcsname%
\global\advance\c@bracketlevel -1\relax%
}%
}
\def\bracketfactory#1#2#3{%
\@bracketfactory{#1}{#2}{#3}{relax}{0.5mu plus 0.25mu}{0.5mu plus 0.15mu}
\@bracketfactory{b#1}{#2}{#3}{big}{1mu plus 0.25mu minus 0.25mu}{0.6mu plus 0.15mu minus 0.15mu}
\@bracketfactory{bb#1}{#2}{#3}{Big}{2.4mu plus 0.8mu minus 0.8mu}{1.8mu plus 0.6mu minus 0.6mu}
\@bracketfactory{bbb#1}{#2}{#3}{bigg}{3.2mu plus 1mu minus 1mu}{2.4mu plus 0.75mu minus 0.75mu}
\@bracketfactory{bbbb#1}{#2}{#3}{Bigg}{4mu plus 1mu minus 1mu}{3mu plus 0.75mu minus 0.75mu}
}
\newcounter{ctr}\loop\stepcounter{ctr}\edef\X{\@Alph\c@ctr}%
\edef\csname s\X\endcsname{\noexpand\mathscr{\X}}
\edef\csname c\X\endcsname{\noexpand\mathcal{\X}}
\edef\csname b\X\endcsname{\noexpand\boldsymbol{\X}}
\edef\csname I\X\endcsname{\noexpand\mathbb{\X}}
\let\@IE\IE\let\IE\undefined
\newcommand{\IE}{\mathop{{}\@IE}\mathopen{}}
\let\@IP\IP\let\IP\undefined
\newcommand{\IP}{\mathop{{}\@IP}}
\newcommand{\bigo}{\mathop{{}\mathrm{O}}\mathopen{}}
\newcommand{\lito}{\mathop{{}\mathrm{o}}\mathopen{}}
\newcommand{\law}{\mathop{{}\sL}\mathopen{}}
\newcommand{\pp}{\mathbb{P}}
\newcommand{\E}{\mathbb{E}}
\newcommand{\R}{\mathbb{R}}
\newcommand{\kU}{\mathcal{U}}
\newcommand{\kA}{\mathcal{A}}
\newcommand{\atanh}{\mathrm{atanh}}
\newcommand{\bp}{\mathbf{p}}
\newcommand{\bb}{\bar{\beta}}
\newcommand{\bh}{\bar{h}}
\newcommand{\hbe}{\hat{\beta}}
\newcommand{\tW}{\tilde{W}}
\newcommand{\tX}{\tilde{X}}
\newcommand{\bob}{\boldsymbol{\beta}}
\newcommand{\bop}{\boldsymbol{p}}
\let\original@left\left
\let\original@right\right
\renewcommand{\left}{\mathopen{}\mathclose\bgroup\original@left}
\renewcommand{\right}{\aftergroup\egroup\original@right}
\def\^#1{\relax\ifmmode {\mathaccent"705E #1} \else {\accent94 #1} \fi}
\def\~#1{\relax\ifmmode {\mathaccent"707E #1} \else {\accent"7E #1} \fi}
\def\*#1{\relax#1^\ast}
\edef\-#1{\relax\noexpand\ifmmode {\noexpand\bar{#1}} \noexpand\else \-#1\noexpand\fi}
\def\>#1{\vec{#1}}
\def\.#1{\dot{#1}}
\def\atop{\@@atop}
\def\%#1{\mathcal{#1}}
\def\mel{\MoveEqLeft}
\renewcommand{\leq}{\leqslant}
\renewcommand{\geq}{\geqslant}
\renewcommand{\phi}{\varphi}
\newcommand{\eps}{\varepsilon}
\newcommand{\D}{\Delta}
\newcommand{\N}{\mathop{{}\mathrm{N}}}
\newcommand{\I}{\mathop{{}\mathrm{I}}\mathopen{}}
\newcommand{\dw}{\mathop{d_{\mathrm{W}}}\mathopen{}}
\newcommand{\dk}{\mathop{d_{\mathrm{K}}}\mathopen{}}
\newcommand\indep{\protect\mathpalette{\protect\@indep}{\perp}}
\def\@indep#1#2{\mathrel{\rlap{$#1#2$}\mkern2mu{#1#2}}}
\newcommand{\longto}{\longrightarrow}
\def\tsfrac#1#2{{\textstyle\frac{#1}{#2}}}
\def\parsetime#1#2#3#4#5#6{#1#2:#3#4}
\def\parsedate#1:20#2#3#4#5#6#7#8+#9\empty{20#2#3-#4#5-#6#7 \parsetime #8}
\def\moddate{\expandafter\parsedate\pdffilemoddate{\jobname.tex}\empty}
\def\note#1{\begin{mdframed}#1\end{mdframed}}%
\let\@@todo\todo
\def\todo#1{\@@todo[color=red,backgroundcolor=red!10,size=\tiny]{#1}}
\begin{document}
\title{\sc\bf\large\MakeUppercase{Mean-field spin models -- Fluctuation of the magnetization and maximum likelihood estimator}}
	\author{\sc Van Hao Can
		\footnote{Institute of Mathematics, Vietnam Academy of Science and Technology.
			\newline
			Email: cvhao@math.ac.vn. 
		}
		 \and \sc Adrian R\"ollin
	 \footnote{Department of Statistics and Data Science,  National University of Singapore.
	 	\newline
	 	Email: adrian.roellin@nus.edu.sg.
	 }
 }
	
	\date{}
	
	\maketitle
	
	\begin{abstract}
		Consider the mean-field spin models where the Gibbs measure of each configuration depends only on its magnetization. 	Based on the Stein and Laplace methods, we give a new and short proof for the  scaling limit theorems  with convergence rate for the magnetization in a perturbed  model. As an application, we derive the scaling limit theorems for the maximum likelihood estimators (MLEs) in  linear models. Remarkably, we characterize the full diagram of fluctuations for the magnetization and MLEs by analyzing the structure of the maximizers of a function associated with the Hamiltonian. For illustration, we apply our results to several well-known mixed spin models, as well as to the annealed Ising model on random regular graphs
  
	\end{abstract}

	\section{Introduction}
	The Ising model was originally proposed for the purpose to study the properties of ferromagnetic materials, but it has become since a prototype spin model on general graphs, see \cite{El,H,N1,N2}. Recently, it has also become a model for describing the pairwise interactions in networks, see e.g. \cite{CGa}, \cite{GG, GR} for its application in social networks, computer vision, and biology. However, in some situations, pairwise interaction is not enough to express the dependence of spins in networks, which motivated the study of higher-order Ising models, where multi-atom interactions are allowed; see for example \cite{Battetal, HBH,S,Yeal, OCPT}. Among various types of multi-spin interactions, three-body and four-body interactions have attracted particular interest from physicists due to their role in describing frustration in complex systems and their potential to enhance quantum computation. For further discussion, see \cite{KFRMC}

 Recently, mathematicians have begun rigorously studying certain mean-field models, including the cubic Ising model by \cite{CMO,E} and homogeneous $p$-spin model by \cite{MSB1,MSB2}. Both of these models can be formulated as a mixed spin model as follows.  Given the temperature parameters~$\bob=(\beta_1,\ldots,\beta_k)\in \IR^k$ and the order of mixed spin~$\bop=(p_1,\ldots,p_k) \in \IN^k$, the Gibbs measure is given by 
 \ben{ \label{mf}
		\mu_n(\omega) \propto \exp \left( H_{n, \bob, \bop} (\omega) \right), \quad \omega \in \Omega_n =\{1,-1\}^n,
	}
 where the  Hamiltonian $ H_{n, \bob, \bop} (\omega)$ is of mixed form:  
\ben{ \label{gibmf}
	 H_{n, \bob, \bop} (\omega)= \sum_{j=1}^k\frac{\beta_j}{n^{p_j-1}} \sum_{1 \leq i_1, \ldots, i_{p_j} \leq n} \omega_{i_1} \ldots \omega_{i_{p_j}}  = n f_{\bop,\bob}(\-\omega), 
}
where
	\be{ 
\bar{\omega} = \frac{\omega_1 + \ldots+ \omega_n}{n}, \qquad		f_{\bop,\bob}(t)=\sum_{i=1}^k\beta_i t^{p_i}.
	}
Note that here and below, for any measure~$\mu$, the notation~$\mu(\omega) \propto f(w)$ means that the value of~$\mu(\omega)$ is proportional to~$f(\omega)$ up to a normalising constant that only depends on the model parameters. In \eqref{mf} and \eqref{gibmf}, all possible~$p_j$-tuples with $j=1,\ldots,k$ in the complete graph of size~$n$ contribute to the Hamiltonian, with each tuple interacting with a different strength parameter.  In the cubic model with $\bop=(p_2,p_3)$, \cite{CMO} offer the complete phase diagram of parameter $\bob=(\beta_2,\beta_3)$, which  determines the scaling limit of the magnetization. Later, \cite{E} establishes  the rate of convergence of limit theorems.  On the other hand, the  $p$-spin model  corresponds to the case  $\bop=(p,1)$ i.e. only the $p$-spin interaction and external field are considered.   \cite{MSB1} and \cite{MSB2} investigate the fluctuation of the magnetization~$\bar{\omega}$, as well as the maximum likelihood estimators for the parameters~$\beta_p$ and~$\beta_1$.  
	
	In this article, we go further and study the fluctuation of the magnetisation in the case where the interaction can be expressed by a general (smooth enough) function of~$\bar{\omega}$ instead of just being a polynomial of~$\bar{\omega}$ as in \eqref{gibmf}. Consider the generalized linear model
	\ben{ \label{3}
		\mu_n(\omega) \propto \exp \bbclr{n\bclr{\beta_1f_1(\-\omega_+)+\ldots+\beta_l f_l(\-\omega_+)} }, \quad \-\omega_+ =\frac{\babs{ \{i: \omega_i=1\}}}{n}, 
	}
	where~$f_1,\dots,f_l$ are smooth functions and~$\beta_1,\dots,\beta_l$ are real-valued model parameters. Note that~$\-\omega_+= (\bar{\omega}+1)/2$, and so studying~$\bar{\omega}$ and~$\-\omega_+$ is equivalent. 	
	Denoting $X_n=\abs{\{i:\omega_i=1\}}=n\-\omega_+$, the linear model \eqref{3} can be characterized by the simpler model
	\be{
\IP[X_n=k] \propto \exp \big(n F(k/n)\big) \binom{n}{k}, \quad 
 0 \leq k \leq n,	
}
where 
\be{
	F(a) = \beta_1 f_1(a) + \ldots+ \beta_l f_l(a), \qquad a \in [0,1].
}
Observe further that 
\be{
\frac{1}{n} \log \binom{n}{k} \approx I(k/n), \quad I(a)= -a \log a + (a-1) \log (1-a),
}	
	here, $I$ is the entropy function. 	Combining Stein's method for normal approximation and Laplace's method, we derive a complete description of the fluctuation of~$\-\omega_+$ (and thus, $\bar{\omega}$). It turns out that the order of the fluctuation depends on the order of regularity of the maximizers of an associated  function~$A: [0,1] \rightarrow \R$ given as
	\ben{ \label{doa}
		A(a) = F(a) + I(a).
	}
More precisely, suppose that $A$ has finite maximizers $(a_j)_{j\in J}$ and that each  $a_j$ is $2m_j$-regular for $j \in J$. Here, a maximizer $a_* \in (0,1)$ is called ~$2m$-regular  if~$A^{(k)}(a_*)=0$ for~$1\leq k \leq 2m-1$ and if~$A^{(2m)}(a_*)<0$. Then our general result implies that~$\-\omega_+$ concentrates around maximizers with highest  regularity order~$(a_j)_{j \in J_1}$, where $J_1=\{j \in J: m_j = \max_{i \in J} m_i \}$. Moreover, for all $j \in J$,   conditionally on $\-\omega_+ \in (a_j-\delta, a_j+\delta)$, the scaled magnetization $(\-\omega_+-a_j)n^{1/(2m_j)}$ converges in law to a random variable $Y_j$  whose the density is proportional to $\exp(-c_jx^{2m_j})$ where $c_j$ depends on $A^{(2m_j)}(a_j)$. Consequently, the complexity of the maximizers of $A$ results in a  diverse phase diagram for the magnetization. 
	
	The second question we address in this article is the construction of suitable estimators of the model parameters. The maximum likelihood estimators (MLEs) in the~$p$-spin Curie-Weiss model was studied by \cite{CG} for~$p=2$ and by \cite{MSB2} for~$p\geq 3$, and for Markov random fields on lattices by \cite{Co,P}. The maximum pseudo likelihood estimation problem of the Ising model on general graphs has been discussed by \cite{Ch} and \cite{GM}. We refer to \cite{MSB2}  and the references therein for further discussion on the history and development of the problem.
	
	In this article, we follow the usual approach to construct the MLE for each parameter~$\beta_i$ using only one sample~$\omega$. In fact, we can construct a consistent estimator~$\hbe_{i,n}$ of~$\beta_i$ using only the quantity~$\-\omega_+$; see more in Section~\ref{sec2}. Apart from consistency, we can also show that, after suitable scaling, $\hbe_{i,n}-\beta_i$ converges to a non-degenerate random variable. A standard approach to study the fluctuation and scaling limits of~$\hbe_{i,n}$ is to prove limit theorems for a perturbed model of \eqref{3}; see for example \cite{CG} and \cite{MSB2} for~$p$-spin Curie-Weiss models. 
	
	In the general setting,   we consider the perturbed model 
	\ben{
		\IP[X_n=k] \propto \exp \bbclr{nA_n(k/n)+n^{1/(2m)} B_{n}(k/n)}, \quad  0\leq k \leq n, \label{4}
	}
	where~$A_n, B_n: \{0,\tfrac{1}{n}, \ldots, 1\} \rightarrow \R$; here, $A_n$ is the main term driving the model and~$B_n$ is the perturbation. We assume in addition  that~$A_n$ and~$B_n$ are well approximated by smooth functions~$A, B: [0,1] \rightarrow \R$ and~$2m$ is the regularity order of the maximizers of $A$. Particularly,  for the linear model \eqref{3}, the knowledge of the fluctuation of~$X_n$  with~$A$ given by \eqref{doa} and~$B$ suitably chosen would lead to the scaling limit of estimators~$\hbe_{1,n}, \dots,\hbe_{l,n}$ of the linear model \eqref{3}. We refer to Section~\ref{sec2} for detailed proofs.

	The usual strategy to investigate the Gibbs measure of the form \eqref{3} (or the more general form \eqref{4}) is using Laplace's method to prove the concentration and scaling limit of magnetization around maximizers of~$A(a)$. This approach usually requires many tedious and difficult computations of exponential functionals. Our main innovation in the study of the perturbed model \eqref{4} is  exploiting  Stein's method to avoid some of these complicated computations. Moreover, as an additional bonus of using Stein's method, we also obtain the rate of convergence in our limit theorems. We refer to Section~\ref{sec1} for more details. 
	
	We briefly summarize the main findings of this paper.
	\begin{itemize}
          \item[$\triangleright$] Main Theorems (Theorems \ref{thm1}–\ref{thm3}): We provide general sufficient conditions \ref{A1}–\ref{A4} for the Hamiltonian of mean-field models under which the law of large numbers, concentration inequalities, and conditional scaling limit theorems hold. Among these, condition \ref{A1} plays a crucial role in determining the phase diagram for the limit theorems. As a result, studying mean-field spin models can now be reduced to describing condition \ref{A1}, or understanding the maximizers of the associated function  $A$.

          More specifically, in Theorems~\ref{thm1}--\ref{thm2}, using Laplace's methods, we obtain the strong law of large numbers and concentration inequalities for the magnetization (or for~$X_n$).  In Theorem~\ref{thm3}, using Stein's method we  give a concise proof for distributional  limit theorem of the magnetization  with convergence rate in Wasserstein distance. 
		\item[$\triangleright$] Application to MLEs (Theorem~\ref{thm4}): By applying the limit theorems for magnetization in perturbed models, we establish the scaling limits of maximum likelihood estimators for the linear model in \eqref{3}. 
     \item[$\triangleright$] Application to specific models: A significant part of Section 5 is devoted to investigating particular mixed spin models using our main theorems (\ref{thm1}–\ref{thm4}). We demonstrate a rich phase diagram for the scaling limits of magnetization, inherited from the complex structure of the maximizers of the associated function $A$  as presented in the condition~\ref{A1}. In addition to mixed spin models, we also apply our results to the annealed Ising model on random regular graphs.

   \end{itemize}

\subsection{Notation}

For any random variables~$X$ and~$Y$, we consider the Kolmogorov and Wasserstein probability metrics, defined as
\ba{
		\dk(X,Y) & = \sup_{t \in \R} |\IP[X \leq t]-\IP[Y \leq t]|,\\
		\dw (X,Y) & = \sup_{\norm{h'}\leq 1} \abs{\IE h(X) -\IE h(Y)}.
} 
For~$a>0$, we denote by $N^+(0,a)$ (resp.~$N^{-}(0,a)$) the positive (resp. negative) half-normal distribution, that is the distribution of $|Z|$ (resp.~$-|Z|$), where $Z \sim N(0,a)$.  	Let~$X$ be a random variable with density~$p(x)$. We write~$p(x)\propto f(x)$ if~$p(x)$ is proportional to~$f(x)$ up to a normalizing constant, and in such a case, we also write~$X \propto f(x)$ if~$X$ has distribution with density given by~$p(x)$. Let~$f$ and~$g$ be two real functions. We write~$f=\bigo(g)$ if there exists a universal constant~$C>0$ such that~$f(x) \leq C g(x)$ for all~$x$ in the domain of~$f$ and~$g$. We also write~$f=g+\bigo(h)$ when~$|f-g|=\bigo(|h|)$, and write~$f=\exp(g+\bigo(h))$ if~$|\log f - g| =\bigo(|h|)$.  In some cases, we write~$f=\bigo_{\delta}(g)$ to emphasize that the constant~$C$ may depend on~$\delta$.
  
\section{The magnetization in perturbed models} \label{sec1}

Let~$A_n, B_n: \{0,1/n, \ldots, 1\} \rightarrow \R$ and~$m_* \in \IN$. We consider the integer-valued random variable~$X_n$ defined by the model
\be{
	\IP[X_n=k] = \frac{1}{Z_n} \exp \bclr{H_n(k/n)}, \quad  0 \leq k  \leq n, 
}
where 
\bg{
	\quad H_n(k/n) =nA_n(k/n)+n\sigma_{*,n} B_{n}(k/n), \quad  \sigma_{*,n} =n^{-1+1/(2m_*)}, \\
	Z_n= \sum_{k=0}^{n}\exp \bclr{H_n(k/n)}.
}
In what follows, we will make use of various technical assumptions. Let~$\eps_*$, $\delta_*$, and~$C_*$ be positive constants, let~$(a_j,m_j)_{j\in J}$ be a finite collection of pairs with~$a_j\in(0,1)$ and~$m_j\in \IN$ for~$j\in J$, and let~$A,B:[0,1] \rightarrow \IR$ be functions such that~$A \in C^{2m_*+1}([0,1])$ and~$B \in C^2([0,1])$. Consider the following assumptions:
\begin{enumerate}[label=\textrm{(A\arabic*)}]
	\item\label{A1}~$(a_j)_{j \in J}$ are all the maximizers of~$A$, and~$\max_{j\in J}m_j=m_*$.  We have~$A'(a_j)=\ldots=A^{(2m_j-1)}(a_j)=0$ and~$\max_{|x-a_j| \leq \delta_*} A^{(2m_j)}(x)<0$ for all~$j\in J$.  The intervals~$(a_j-\delta_*,a_j+\delta_*)$, $j\in J$, are disjoint and contained in~$(0,1)$. 

  \item\label{A2} For~$n$ large enough and for all~$k$ for which~$|k/n-a_j| \geq \delta_*$ for all~$j \in J$, we have
		\[ A_n(k/n) \leq \max_{x \in [0,1]} A(x) -\varepsilon_*, \quad |B_n(k/n)| \leq C_*. \]
  \item\label{A3}  For~$n$ large enough and for all~$k, \ell$  for which there is~$j\in J$ such that~$|k/n-a_j|, |\ell/n-a_j| < \delta_*$, we have
		\ben{ \label{A3i}
			|A_n(k/n) -  A(k/n)| \leq \frac{C_* \log n}{n}; \quad   |B_{n}(k/n) -  B(k/n)| \leq  \frac{C_*}{n} \tag{A3i}
		}
		and 
		\ben{ \label{A3ii}
			\babs{[A_n \left(k/n\right) - A_n \left(\ell/n\right)] - [A \left(k/n\right) - A \left(\ell/n\right)]} \leq  \frac{C_*|k-\ell|}{n^2}. \tag{A3ii}  
		}
  \item\label{A4} Let 
  	\be{
	J_1= \{j \in J: B(a_j) = \max_{k \in J} B(a_k) \}, \quad 	J_2=	 \{ j \in J_1: m_j = \max_{k \in J_1} m_k  \}. 
	  }	
   Then there exist real numbers $(\nu_j)_{j \in J_2}$ such that for~$n$ large enough and for~$k_j=[na_j]$, $j \in J$,  we have
		\be{
			\sup_{i, j \in  J_2} \babs{n[A_n( k_i/n)-A_n( k_j/n)]-(\nu_i-\nu_j)} \leq  \frac{C_*}{n\sigma_{*,n}}.
		} 
\end{enumerate}

\begin{remark}  \label{rem:a4}
  Let us  examine the  \ref{A1}--\ref{A4} for a typical class of  mean field model, where  
\[
\mu_n(\omega) \propto \exp \left(nF(\-\omega_+) + n \sigma_{*,n} B(\-\omega_+) \right),
\]
where $F$ and  $B$ are functions in $C^{2m_*+1}([0,1])$ and $C^2([0,1])$ respectively. We then have 
\be{
A_n(k/n)= F(k/n) + \frac{1}{n} \log  \binom{n}{k}.
}
By the Stirling formula, $n!=\sqrt{2 \pi n} (\tfrac{n}{e})^n(1+\bigo(n^{-1}))$. Hence, given $\delta \in (0,1)$,  for all $n \delta \leq k \leq n(1-\delta)$   
\ben{ \label{stir}
\frac{1}{n} \log  \binom{n}{k} = \frac{1}{n}  \log \sqrt{\frac{n}{2\pi(n-k)k}} + I(k/n) +  \bigo_\delta(n^{-2}).
}
Let 
\[
A(t) = F(t) + I(t).
\]
Since $I'(0)=\infty$ and $I'(1)=-\infty$, there exists a positive constant $\delta_*$ such that  the set of maximizers of $A$, denoted by $(a_j)_{j \in J}$, lies in $[\delta_*,1-\delta_*]$ and satisfies the condition (A1). Using the approximation \eqref{stir}, the conditions \ref{A2} and \eqref{A3i} can be easily verified. For \eqref{A3ii}, using \eqref{stir} and the inequality that $|log x - \log y| \leq |x-y|/ \min\{x,y\}$, we have 
\ba{
	\babs{[A_n \left(k/n\right) - A_n \left(\ell/n\right)] - [A \left(k/n\right) - A \left(\ell/n\right)]} &=  \bigo_{\delta_*}(n^{-2}) + \frac{1}{n} \babs{\log \frac{k(n-k)}{\ell(n-\ell)}} \\
	& =\bigo_{\delta_*}(1) \frac{|k-\ell|}{n^2}.
}
Finally,  define for $j \in J$
\ben{\label{nuj}
\nu_j=\log \sqrt{\frac{1}{(1-a_j)a_j}}. 
}
Then using \eqref{stir} again, we have 
\be{
			\sup_{i, j \in  J} \babs{n[A_n( k_i/n)-A_n( k_j/n)]-(\nu_i-\nu_j)} =\bigo(n^{-1}), 
		}
and thus \ref{A4} is satisfied.
\end{remark}

\begin{theorem}[Weak law of large numbers] \label{thm1} Under Assumptions \textnormal{\ref{A1}--\ref{A4}},
we have
\ben{ \label{5}
\frac{X_n}{n} \stackrel{\law}{\longto} \sum_{j \in J_2} p_j \delta_{a_j}, 
}
where for~$j \in J_2$,
\be{
p_j = \frac{q_j e^{\nu_j}}{\sum_{k \in J_2} q_k e^{\nu_k}}, \qquad q_j = \int_{\IR} \exp(c_jx^{2m_j} + b_j x)dx,  
}
with 
\ben{ \label{6}
c_j =\frac{A^{(2m_j)}(a_j)}{(2m_j)!}, \qquad b_j = B'(a_j)\I[m_j=m_*]. 
}
\end{theorem}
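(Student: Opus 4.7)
The plan is a Laplace-type analysis of the partition function. For each $j \in J$, set
\be{W_{j,n} := \sum_{k\,:\,|k/n - a_j| < \delta_*} \exp\bclr{H_n(k/n)}.}
It suffices to show that $W_{j,n}/Z_n \to p_j$ for $j \in J_2$, that $W_{j,n}/Z_n \to 0$ for $j \in J \setminus J_2$, and that the complement $Z_n - \sum_{j \in J} W_{j,n}$ is negligible relative to $Z_n$; weak convergence to $\sum_{j \in J_2} p_j \delta_{a_j}$ then follows because the local analysis below will also show that $X_n/n$ concentrates at $a_j$ conditional on belonging to $(a_j - \delta_*, a_j + \delta_*)$. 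The complement is controlled by \ref{A2}, which gives the uniform upper bound $(n+1)\exp\bclr{n(\max A - \eps_*) + n\sigma_{*,n}C_*}$, whereas evaluating $W_{j,n}$ at the single summand $k = [na_j]$ and applying \ref{A3} provides the lower bound $\exp\bclr{nA_* - \bigo(n\sigma_{*,n})}$ with $A_* := \max A$. The ratio is exponentially small because $n\sigma_{*,n} = n^{1/(2m_*)} = \lito(n)$.

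For the local analysis near a fixed maximizer $a_j$, write $k = k_j + y$ with $k_j = [na_j]$ and $y$ an integer. Since all derivatives of $A$ of order strictly below $2m_j$ vanish at $a_j$ by \ref{A1}, Taylor expansion gives
\ba{
A(k/n) - A(a_j) &= c_j (k/n - a_j)^{2m_j} + \bigo\bclr{|k/n - a_j|^{2m_j+1}}, \\
B(k/n) - B(a_j) &= B'(a_j)(k/n - a_j) + \bigo\bclr{(k/n - a_j)^{2}}.
}
Using \ref{A3} to replace $(A_n, B_n)$ by $(A, B)$ with errors $\bigo(1/n)$ pointwise and $\bigo(|k-\ell|/n^2)$ in differences, and substituting $y = n^{1 - 1/(2m_j)} x$, the three scales $n$, $n\sigma_{*,n} = n^{1/(2m_*)}$, and $n^{1-1/(2m_j)}$ align so that, uniformly on compact $x$-sets,
\be{H_n(k/n) - H_n(k_j) = c_j x^{2m_j} + \I[m_j = m_*]\,B'(a_j)\,x + \lito(1).}
The indicator arises because the linear $B$-contribution carries the prefactor $n^{1/(2m_*) - 1/(2m_j)}$, which equals $1$ when $m_j = m_*$ and is $\lito(1)$ otherwise, reproducing the $b_j$ of \eqref{6}. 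Since $c_j < 0$ by \ref{A1}, the Riemann sum (with spacing $\Delta x = n^{-(1 - 1/(2m_j))} \tozero$) converges to the integral $q_j$, giving
\be{W_{j,n} = \exp\bclr{H_n(k_j)} \, n^{1 - 1/(2m_j)} \, \bclr{q_j + \lito(1)}.}

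It remains to compare $W_{j,n}$ across $j \in J$ by examining the three factors $\exp\bclr{nA_n(k_j/n)}$, $\exp\bclr{n\sigma_{*,n} B_n(k_j/n)}$, and $n^{1-1/(2m_j)}$. Since $A(a_j) = A_*$ for every maximizer, the $A_n$-factors cancel at leading order; for $i, j \in J_2$, \ref{A4} refines this to $n|A_n(k_j/n) - A_n(k_i/n)| = \bigo(1/(n\sigma_{*,n})) = \bigo(n^{-1/(2m_*)}) \tozero$, so those exponentials are asymptotically equal. By \ref{A3}, $\exp\bclr{n\sigma_{*,n} B_n(k_j/n)}$ is asymptotically equivalent to $\exp\bclr{n\sigma_{*,n} B(a_j)}$, which is super-polynomially smaller for $j \notin J_1$ than for $j \in J_1$ because $n\sigma_{*,n} \toinf$. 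Among $j \in J_1$, the factor $n^{1-1/(2m_j)}$ is maximised exactly on $J_2$, so $j \in J_1 \setminus J_2$ contributes a vanishing fraction. Combining these three observations yields $W_{j,n}/Z_n \to q_j / \sum_{k \in J_2} q_k = p_j$ for $j \in J_2$ and $\to 0$ otherwise. The main obstacle is the three-scale bookkeeping in the local expansion while also handling the integer shift from $k_j = [na_j]$ (as opposed to $na_j$) and verifying that the truncated Riemann sum on $|x| \leq \delta_* n^{1/(2m_j)}$ converges to the full integral $q_j$ over $\R$; the latter is straightforward since $c_j < 0$ forces super-polynomial decay of the integrand and the truncation range tends to infinity.
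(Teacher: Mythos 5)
Your proposal is correct and follows essentially the same route as the paper: the local sums $W_{j,n}$ are exactly the paper's $Z_{n,j}(\delta_*)$ from Proposition~\ref{prop1}, the Laplace/Riemann-sum expansion with the rescaling $y=n^{1-1/(2m_j)}x$ is the content of that proposition's proof, and the three-way comparison (exponential suppression outside $J_1$ via the $B$-factor, polynomial suppression of $J_1\setminus J_2$ via $n^{1-1/(2m_j)}$, and \ref{A4} to equalize the $A_n$-exponentials within $J_2$) mirrors \eqref{16}--\eqref{18}. The only loose point is that compact-uniform convergence of the summand does not by itself give convergence of the truncated Riemann sum to $q_j$; one needs a uniform-in-$n$ dominating bound over the whole window $|x|\leq\delta_* n^{1/(2m_j)}$, which follows from $\max_{|x-a_j|\leq\delta_*}A^{(2m_j)}(x)<0$ in \ref{A1} (the paper's $\alpha_j$ and its split into the regimes $|k-na_j|\leq(\log n)/\sigma_j$, up to $n\eps$, and up to $n\delta$), rather than from the decay of the limiting integrand alone.
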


\begin{theorem}[Concentration] \label{thm2} Assume \textnormal{\ref{A1}--\ref{A3}}, and let~$\delta \in (0,\delta_*)~$. There exist a positive constants~$c$ such that
 \ben{ 
 	\pp  \left[\text{$|X_n/n-a_j| > \delta$ for all~$j \in J$} \right] \leq \exp(-cn)\label{7}
  }
and 
 \ben{ 
 	\pp \left[\text{$|X_n/n-a_j| > \delta$ for all~$j \in J_1$} \right] \leq \exp(-cn\sigma_{*,n}).\label{8}
}
Moreover, for any~$j_2\in J_2$, there exists a constant~$C$ such that if~$J_1 \neq J_2$,
\ben{
  \pp \left[\text{$|X_n/n-a_j| > \delta$ for all~$j \in J_2$} \right] \leq C \max_{j_1 \in J_1 \setminus J_2} n^{1/(2m_{j_1})-1/(2m_{j_2})},\label{9}
}
and, for any~$j \in J_2$, 
\ben{ \label{10}
\pp[|X_n/n-a_j| \leq \delta_*] = p_j + \bigo(\tau_{*,n}) + \bigo\bbbclr{\max_{k \in J_1 \setminus J_2} n^{1/(2m_k)-1/(2m_{j})}},
}
where
\be{
\tau_{*,n} =  \frac{(\log n)^{2m_*+1}}{n^{1/(2m_*)}}  +  n^{1/(2m_*)-1/(2m_{j_2})} \log n \I[m_{j_2} \neq  m_*].
}
\end{theorem}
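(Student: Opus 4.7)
The plan is to carry out a Laplace-method analysis of the partition function, decomposing $Z_n = \sum_{j \in J} Z_{n,j} + Z_n^{\mathrm{far}}$ with $Z_{n,j} := \sum_{k :\, |k/n - a_j| < \delta_*} \exp(H_n(k/n))$, and reading each probability in (\ref{7})--(\ref{10}) as a ratio of partial sums to $Z_n$. The cruder bounds (\ref{7})--(\ref{8}) need only a one-term lower bound on $Z_n$ set against uniform upper bounds on the relevant numerators, while (\ref{9})--(\ref{10}) require sharp Laplace asymptotics for each $Z_{n,j}$.

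For (\ref{7}), assumption \ref{A2} bounds $A_n$ below $\max A - \varepsilon_*$ outside the $\delta_*$-neighborhoods, while on the annulus $\delta \leq |k/n - a_j| < \delta_*$ the Taylor bound from \ref{A1} gives $A(k/n) \leq \max A - c\delta^{2m_*}$; together with \ref{A3}, the numerator is at most $(n+1)\exp(n(\max A - c') + O(n\sigma_{*,n}))$. The denominator is bounded below by a single term at $k_{j_2} = [na_{j_2}]$ for any $j_2 \in J_2$, giving $Z_n \geq \exp(n\max A + n\sigma_{*,n} B(a_{j_2}) - O(1))$, hence the $\exp(-cn)$ rate. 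For (\ref{8}), the same lower bound is compared against $\sum_{j \in J \setminus J_1} Z_{n,j} \leq n|J|\exp(n\max A + n\sigma_{*,n}(\max_{j \notin J_1} B(a_j) + o(1)))$; the strict gap between $\max_J B$ and $\max_{j \notin J_1} B(a_j)$, guaranteed by the definition of $J_1$, produces the $\exp(-cn\sigma_{*,n})$ rate.

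For (\ref{9})--(\ref{10}) I would Taylor-expand $A$ and $B$ around $a_j$ and substitute $t = k/n - a_j = x\,n^{-1/(2m_j)}$: the dominant term $nc_j t^{2m_j}$ rescales to $c_j x^{2m_j}$, and $n\sigma_{*,n} B'(a_j)\,t$ rescales to $n^{1/(2m_*)-1/(2m_j)} B'(a_j)\,x$, which survives only when $m_j = m_*$. The Riemann sum with step $n^{-1+1/(2m_j)}$ approximates $q_j$, yielding $Z_{n,j} = n^{1-1/(2m_j)} \exp(nA(a_j) + n\sigma_{*,n} B(a_j))\,q_j\,(1+O(\tau_{*,n}))$. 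Inequality (\ref{9}) then follows because $Z_{n,j_1}/Z_{n,j_2}$ for $j_1 \in J_1 \setminus J_2$ and $j_2 \in J_2$ has matching exponential factors but polynomial-prefactor ratio $n^{1/(2m_{j_2})-1/(2m_{j_1})}$, a negative power of $n$ since $m_{j_2} > m_{j_1}$; inequality (\ref{10}) follows by dividing $Z_{n,j}$ by the dominant asymptotic $Z_n \sim \exp(n\max A + n\sigma_{*,n} B^*)\,n^{1-1/(2m_{j_2})} \sum_{k \in J_2} q_k$.

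The main obstacle will be establishing the sharp rate $\tau_{*,n}$. Two error sources must be balanced simultaneously: the Taylor remainder of $A$ beyond order $2m_j$, which after truncating the rescaled $x$-integration at scale $(\log n)^{1/(2m_*)}$ yields the $(\log n)^{2m_*+1}/n^{1/(2m_*)}$ term; and, when $m_{j_2} < m_*$, the neglect of the linear $B$-perturbation inside the $J_2$ local sums, which integrates to $n^{1/(2m_*)-1/(2m_{j_2})}\log n$. A third subtlety is that the prefactors $\exp(nA_n(k_i/n))$ for distinct $i \in J_2$ a priori differ by $O(n)\cdot |A_n(k_i/n) - A_n(k_{j_2}/n)|$ in the exponent, which would swamp $\tau_{*,n}$ unless tightly controlled; assumption \ref{A4}, with its $O(1/(n^2 \sigma_{*,n}))$ bound on these discretization-level differences, is calibrated exactly so that the resulting exponent discrepancy $O(n^{-1/(2m_*)})$ is absorbed into $\tau_{*,n}$.
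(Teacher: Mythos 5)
Your proposal follows essentially the same route as the paper: crude term-by-term exponential comparisons for \eqref{7} and \eqref{8}, sharp Laplace asymptotics for the local sums $Z_{n,j}$ via Taylor expansion and a Riemann-sum approximation of $q_j$ (this is the paper's Proposition~\ref{prop1}), and then \eqref{9}--\eqref{10} as ratios of these asymptotics, with \ref{A4} invoked to align the prefactors across $J_2$ --- exactly as in the paper, whose proof of \eqref{10} likewise relies on \ref{A4} despite the theorem's stated hypotheses. Three local points need repair, none of them conceptual. First, your displayed asymptotic $Z_{n,j}=n^{1-1/(2m_j)}\exp(nA(a_j)+n\sigma_{*,n}B(a_j))\,q_j\,(1+\bigo(\tau_{*,n}))$ cannot hold as written: \ref{A3} only gives $n\abs{A_n(k_j/n)-A(a_j)}=\bigo(1)$, so replacing $A_n(k_j/n)$ by $A(a_j)$ in the exponent costs a multiplicative constant bounded away from $1$, not $1+\bigo(\tau_{*,n})$; the expansion must be centred at $\exp(nA_n(k_j/n))$, which is precisely why \ref{A4} compares the values $A_n(k_i/n)$ directly rather than the $A(a_i)$. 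Your final paragraph shows you understand this, but the displayed formula contradicts it. Second, in \eqref{8} the bound $B_n(k/n)\leq\max_{j\notin J_1}B(a_j)+\lito(1)$ over the whole $\delta_*$-window is not immediate, since $B$ can move by $\bigo(\delta_*)$ there; you must either shrink the window and dispose of the remaining annulus with the $\exp(-cn)$ bound, or balance the linear growth of $B$ against the order-$2m_j$ decay of $A$. Third, truncating the rescaled variable at $(\log n)^{1/(2m_*)}$ leaves a tail $\int_{|x|\geq(\log n)^{1/(2m_*)}}e^{c_jx^{2m_j}}dx$ of order $n^{-|c_j|}$, which need not be $\bigo(\tau_{*,n})$; truncate at $|x|\leq\log n$, as the form of the error term itself indicates and as the paper does.
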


\begin{theorem}[Distributional limit theorem] \label{thm3}
Under Assumptions \textnormal{\ref{A1}--\ref{A3}}, we have for all~$j \in J$ and~$l \in \IN$ that
\be{
	\IE\bclc{|X_n/n-a_j|^l \given |X_n-na_j| \leq n\delta_*} =\bigo\bclr{n^{-l/(2m_j)}};
}
and for all~$j \in J$ that
\bm{
	\dw \bclr{\law\bclr{n^{1/(2m_j)}(X_n/n-a_j)\given |X_n-na_j| \leq n\delta_*}, \law(Y_j)} \\
	=\bigo\bclr{n^{-1/(2m_j)}} + \bigo\bclr{n^{1/(2m_*)-1/(2m_j)}  \I[m_j \neq  m_*]},
}
where~$Y_j \propto \exp(c_jx^{2m_j}+b_j x)$ with~$c_j$ and~$b_j$ given as in \eqref{6}.
\end{theorem}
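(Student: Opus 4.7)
The plan is to combine Laplace's method for the moment estimate with a discrete-to-continuous Stein's method for the Wasserstein bound, both working on the conditional law~$\law(X_n \given E_j)$ with~$E_j := \{|X_n - n a_j| \leq n \delta_*\}$. Throughout, I would introduce~$\tX_n := n^{1/(2m_j)}(X_n/n - a_j)$, $\xi_n := n^{-1 + 1/(2m_j)}$ (so that~$X_n \mapsto X_n + 1$ induces~$\tX_n \mapsto \tX_n + \xi_n$), and~$\Delta_n(k) := H_n(k/n) - H_n((k-1)/n)$.

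For the moment bound, Assumption~\ref{A3} lets me replace~$A_n, B_n$ with~$A, B$ up to an~$\bigo(1/n)$ error on~$E_j$. Since~$A'(a_j) = \cdots = A^{(2m_j - 1)}(a_j) = 0$, Taylor expansion of~$A$ at~$a_j$ gives
\be{
	H_n(k/n) - H_n([n a_j]/n) = n c_j (k/n - a_j)^{2m_j}(1 + \lito(1)) + \bigo(n \sigma_{*,n}) + \bigo(1)
}
uniformly over~$k/n \in [a_j - \delta_*, a_j + \delta_*]$. After rescaling by~$y = n^{1/(2m_j)}(k/n - a_j)$, the conditional law of~$\tX_n$ becomes a Riemann-type discretization of the density~$\propto \exp(c_j y^{2m_j})$ on~$\R$, whose moments are all finite since~$c_j < 0$. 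A direct Riemann-sum comparison then yields~$\IE[|\tX_n|^l \given E_j] = \bigo(1)$ for every fixed~$l \in \IN$.

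For the Wasserstein bound, I would use the Stein operator
\be{
	\cA f(y) = f'(y) + (2 m_j c_j y^{2m_j - 1} + b_j) f(y)
}
that characterizes~$Y_j$; standard estimates give, for any 1-Lipschitz~$h$, a solution~$f_h$ of~$\cA f_h = h - \IE h(Y_j)$ with~$\max\{|f_h|, |f_h'|, |f_h''|\}(y) \leq C(1 + |y|^{2m_j - 1})$. Summation by parts on the conditional probabilities~$P_k^{(j)} := \IP[X_n = k \given E_j]$, using the ratio identity~$P_{k-1}^{(j)}/P_k^{(j)} = e^{-\Delta_n(k)}$, yields
\be{
	\IE\bcls{f_h(\tX_n + \xi_n) - f_h(\tX_n) \given E_j} = \IE\bcls{f_h(\tX_n)\bclr{e^{-\Delta_n(X_n)} - 1} \given E_j} + R_\partial,
}
where the boundary term~$R_\partial$, coming from~$|k/n - a_j| \simeq \delta_*$, is of size~$\bigo(e^{-c n})$ by Theorem~\ref{thm2}. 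Taylor-expanding the left-hand side through second order in~$f_h$ and the exponential on the right through second order in~$\Delta_n$, and then dividing by~$\xi_n$, produces~$\IE[\cA f_h(\tX_n) \given E_j]$ as a sum of remainders. The key expansion
\be{
	\Delta_n(k)/\xi_n = 2 m_j c_j \tilde x_k^{2m_j - 1} + b_j + \bigo\bclr{\tilde x_k^{2m_j} n^{-1/(2m_j)}} + \bigo\bclr{n^{1/(2m_*) - 1/(2m_j)} \I[m_j \neq m_*]} + \bigo\bclr{n^{-1/(2m_j)}}
}
(with~$\tilde x_k = n^{1/(2m_j)}(k/n - a_j)$) then follows from Assumption~\ref{A3} and Taylor expansion of~$A'$ and~$B'$ at~$a_j$, noting that~$b_j = B'(a_j)$ when~$m_j = m_*$, whereas the genuine~$B$-contribution of size~$\sigma_{*,n}/\xi_n$ emerges as an additional error when~$m_j < m_*$.

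The main obstacle is a careful accounting of error contributions---the~$\bigo(1/n)$ gap between~$A_n, B_n$ and~$A, B$; Taylor remainders for~$A'$ and~$B'$ at~$a_j$; the second-order term~$\tfrac12 \Delta_n^2 / \xi_n$ from expanding~$1 - e^{-\Delta_n}$; the discrete-to-continuous Taylor error~$\xi_n \IE|f_h''(\tX_n)|$; and the scale mismatch~$\sigma_{*,n}/\xi_n = n^{1/(2m_*) - 1/(2m_j)}$ appearing only when~$m_j \neq m_*$. Each is bounded using the polynomial growth of~$f_h$ combined with the moment estimate of the first part; a short case split on~$m_j = m_*$ versus~$m_j < m_*$ shows that all of these combine to~$\bigo(n^{-1/(2m_j)})$ except the scale-mismatch term, which contributes the separate~$\bigo(n^{1/(2m_*) - 1/(2m_j)})$ precisely when~$m_j \neq m_*$, yielding the stated Wasserstein bound.
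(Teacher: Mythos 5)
Your proposal is correct and follows essentially the same route as the paper: Stein's method for the target density $\propto\exp(c_jy^{2m_j}+b_jy)$, summation by parts via the ratio identity for the conditional probabilities, Taylor expansion of the Hamiltonian increment through \ref{A1} and \ref{A3}, and moment bounds from Laplace's method to control the remainders. The only cosmetic differences are that the paper's Lemma~\ref{lem1} (after Chatterjee--Shao) gives \emph{uniformly} bounded $f_h,f_h',f_h''$ rather than your polynomially growing bounds, and it controls the exponential via a Lipschitz estimate rather than a second-order expansion; both variants lead to the same error accounting.
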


\begin{remark} In Theorem~\ref{thm1}, Assumption~\ref{A4} is not necessary when $|J_2|=1$ (particularly when~$A$ has a unique maximizer). In fact,~\ref{A4} is only required in \eqref{17} to prove \eqref{5}, where we  compare the Gibbs measure around the maximizers.   
\end{remark}

\section{Proofs of main results} 

To simplify notation,  we will drop the dependence on~$n$ in what follows and write~$X$, $W$, $\sigma$ and~$\tau$ instead of~$X_n$, $W_n$, $\sigma_n$ and~$\tau_n$, and introduce some notation 
\be{
	\sigma_j = n^{1/(2m_j)-1}; \qquad J_*=\{j \in J: \sigma_j =\sigma_* \} =\{j \in J: m_j =m_* \}.	
}
In order to prove Theorems~\ref{thm1},~\ref{thm2} and~\ref{thm3}, the following result is key. 
\begin{proposition} \label{prop1}
	Assume \textnormal{\ref{A1}--\ref{A3}}, and let~$\delta \in (0,\delta_*]$. Then for all~$j \in J$, we have
	 \bes{ 
		Z_{n,j}(\delta)&:= \sum_{|k/n-a_j| \leq \delta} \exp(H_n(k/n)) \\
		&= (q_j +\bigo_{\delta}(\tau_j)) \sigma_j^{-1} \exp(nA_n(k_j/n)+n\sigma_* B(a_j)),
	}
	where
	$k_j=[na_j]$ and~$\tau_j$, $q_j$, $c_j$ and~$b_j$ are given in Theorem~\ref{thm1}.	
\end{proposition}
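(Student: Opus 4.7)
The plan is a lattice Laplace analysis in a window around $a_j$: factor out the dominant exponential $\exp\bclr{nA_n(k_j/n) + n\sigma_* B(a_j)}$, introduce the natural rescaling $x = x(k) := n^{1/(2m_j)}(k/n - a_j)$ whose lattice spacing is exactly $\sigma_j$, and identify the remaining lattice sum as a Riemann approximation to $q_j$. The window $\{k : |k/n - a_j| \leq \delta\}$ is split into a core $\{|x(k)| \leq \log n\}$ and its complement, which I handle separately.

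On the core, Assumption \ref{A3} lets me replace the differences $A_n(k/n) - A_n(k_j/n)$ and $B_n(k/n) - B_n(k_j/n)$ by their smooth-limit counterparts with $\bigo(|k-k_j|/n^2)$ error. Taylor expansion around $a_j$, using \ref{A1} to kill all derivatives of $A$ up to order $2m_j - 1$, then gives
\[
nA_n(k/n) - nA_n(k_j/n) = c_j x^{2m_j} + \bigo\bclr{(\log n)^{2m_j+1} n^{-1/(2m_j)}},
\]
while first-order expansion of $B$ gives
\[
n\sigma_*\bclr{B_n(k/n) - B_n(k_j/n)} = n^{1/(2m_*) - 1/(2m_j)} B'(a_j)\, x + \bigo\bclr{(\log n)^2\, n^{1/(2m_*) - 1/m_j}} + \bigo(\sigma_*).
\]
When $m_j = m_*$ the linear coefficient is exactly $b_j$; when $m_j < m_*$, by definition $b_j = 0$ and the apparent coefficient $n^{1/(2m_*) - 1/(2m_j)}$ tends to $0$, so that linear piece is absorbed as error of order $n^{1/(2m_*) - 1/(2m_j)}\log n$. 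Letting $E_n(k)$ denote the total core error (uniformly $o(1)$), the core contribution factors as $\exp\bclr{nA_n(k_j/n) + n\sigma_* B(a_j)}\bclr{1 + \bigo(\|E_n\|_\infty)}\sum_{\mathrm{core}} \exp(c_j x^{2m_j} + b_j x)$. The Riemann sum $\sigma_j\sum_{\mathrm{core}} \exp(c_j x^{2m_j} + b_j x)$ equals $\int_{-\log n}^{\log n}\exp(c_j x^{2m_j} + b_j x)\,dx + \bigo(\sigma_j)$, and extending the integral to $\IR$ costs only a super-polynomially small tail (since $c_j < 0$ and $2m_j \geq 2$); this identifies the $q_j$ factor with an admissible error.

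For the tail $\{|x(k)| > \log n\}$ within the window, Assumption \ref{A1} forces $A^{(2m_j)} \leq -c'$ on $(a_j - \delta_*, a_j + \delta_*)$, so Taylor's theorem with Lagrange remainder yields $A(k/n) - A(a_j) \leq -c''(k/n - a_j)^{2m_j}$, i.e.\ $nA(k/n) - nA(a_j) \leq -c'' x^{2m_j}$. The linear $B$-contribution is at most $\bigo(|x|)$, which is dominated by $c'' x^{2m_j}/2$ once $|x| \geq \log n$ (since $2m_j \geq 2$). Hence the tail reduces to at most $n\delta$ terms each at most $\exp(-c''(\log n)^{2m_j}/2)$ relative to the peak, which is negligible against the $\sigma_j^{-1}$ that the core already provides.

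The main obstacle is the error bookkeeping: five ingredients --- Taylor remainder of $A$, Taylor remainder of $B$ scaled by $n\sigma_*$, the $A_n \leftrightarrow A$ and $B_n \leftrightarrow B$ swap from \ref{A3}, the Riemann-vs-integral discrepancy, and the core/tail cutoff --- must each be shown to land inside the declared $\tau_j$. The mildly delicate point is the case $m_j < m_*$, where the linear-in-$x$ contribution from $B$ has a small but nonzero apparent coefficient that cannot be merged with the nominal $b_j = 0$; it must instead be tracked uniformly on $|x| \leq \log n$ before being absorbed into an error term of order $n^{1/(2m_*) - 1/(2m_j)}\log n$, exactly matching the auxiliary summand that appears in $\tau_{*,n}$.
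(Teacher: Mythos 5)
Your proposal is correct and follows essentially the same route as the paper's proof: factor out the peak value, cut at $|k-na_j|\leq (\log n)/\sigma_j$, Taylor-expand $A$ and $B$ on the core using \ref{A1} and \ref{A3}, absorb the spurious linear term $n^{1/(2m_*)-1/(2m_j)}B'(a_j)x$ into the $\frac{\sigma_*\log n}{\sigma_j}$ part of $\tau_j$ when $m_j<m_*$, and finish with a Riemann-sum comparison to $q_j$. The only (harmless, slightly cleaner) deviation is your tail bound, where a single Lagrange-remainder estimate $A(k/n)-A(a_j)\leq -c''(k/n-a_j)^{2m_j}$ on the whole $\delta_*$-window replaces the paper's two-region split at an auxiliary scale $\eps=|c_j|/(2C_1)$.
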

The proof of Proposition~\ref{prop1} is based on Laplace's method and will be presented at the end of this section.

\subsection{Concentration and weak law of large numbers}

\begin{proof}[Proof of  Theorems~\ref{thm1} and~\ref{thm2}]
	 We start by proving the concentration inequalities. We first show  that  for any~$\delta \in (0,\delta_*)$, one has 
\ben{ \label{11}
	\pp \bcls{\text{$|X/n-a_j| > \delta$ for all~$j \in J$} } \leq \exp(-cn),
}
where~$c=c(\delta)>0$ is  a constant.  Let~$k$ be an integer such that~$|k-na_j| \geq \delta n$ for all~$j \in J$.  We claim that there exist~$i \in J$ and~$c>0$, such that
\ben{ \label{12}
A_n(k/n) -A_n(k_i/n) \leq -c,
}
where recall that~$k_i=[na_i]$.  Indeed, if~$|k-na_j| \geq \delta_* n$ for all~$j \in J$ then let~$i$ be an arbitrary element of~$J$ and using by~\ref{A2} and~\eqref{A3i}, we have
\bes{
	\mel A_n(k/n)-A_n(k_i/n)\\
	&=A_n(k/n)-A(a_i) + A(a_i)- A(k_i/n) + A(k_i/n)-A_n(k_i/n)\\
	&\leq -\eps_*+ \bigo(|k_i/n-a_i|) +\bigo( \log n/n) \leq -2\eps_*/3,
}
where we have used~$|k_i/n-a_i| \leq 1/n$. Otherwise, suppose that~$|k-na_i| \leq \delta_* n$ for some~$i \in J$. Then 
\bes{
  \mel A_n(k/n)-A_n(k_i/n) \\
  & = A(k/n)-A(k_i/n) + \bigo(1/n)
  = A(k/n)-A(a_i) + \bigo(1/n) \\
&\leq \sup_{x: |x-a_i| \leq \delta_*} A^{(2m_i)}(x) \delta_*^{2m_i}/(2m_i!) + \bigo(1/n)
\leq -c,
}
where~$c=c(\delta_*) >0$. Here, for the first two equations, we used~\eqref{A3i} and~$|k_i/n-a_i| \leq 1/n$, for the remaining inequalities, we used Taylor expansion and~\ref{A1}. The proof of \eqref{12} is complete.

Next, note that by~\ref{A2}, $|B_n(k/n)| \leq C_*$ when~$|k/n-a_j| \geq \delta_*$ for all~$j \in J$,   and by~\ref{A3} for~$k$ such that~$|k/n-a_j| \leq \delta_*$ for some~$j \in J$ one has~$|B_n(k/n)| \leq |B(k/n)|+C_*/n \leq 2 \max_{x \in [0,1]} |B(x)|$. Therefore,
\ben{ \label{13}
\max_{0 \leq k \leq n} |B_n(k/n)| = \bigo(1).
} 
 Combining \eqref{12} and \eqref{13} yields that for all~$n$ sufficiently large 
\besn{ \label{14}
	\mel H_n(k/n)-H_n(k_i/n) \\
	&=n[A_n(k/n) -A_n(k_i/n)]+n \sigma_* [B_n(k/n)-B_n(k_i/n)]\\
	&\leq -cn +\bigo(n\sigma_*)  \leq -cn/2,
}
and thus
\be{
	\IP[X=k] \leq  \exp(-cn/2) \IP[X=k_i] \leq \exp(-c n/4),
}
and \eqref{11} is proved by using the union bound. 

By Proposition~\ref{prop1},  for any fixed~$\delta \in (0,\delta_*)$, for all~$j \in J$ and~$n$ sufficiently large
\besn{ \label{15}
Z_{n,j}(\delta)&:= \sum_{|k/n-a_j| \leq \delta} \exp(H_n(k/n)) \\
&= (q_j +\bigo_{\delta}(\tau_j)) \sigma_j^{-1} \exp(nA_n(k_j/n)+n\sigma_* B(a_j)),
}
where
~$k_j=[na_j]$ and 
\be{ 
  \tau_j = \frac{(\log n)^{2m_*+1}}{n\sigma_*}  + \frac{\sigma_* \log n}{\sigma_j} \I[j \in J\setminus J_*],  
}
and
\be{
q_j =  \int_{\IR} \exp(c_jx^{2m_j} + b_j x)dx,
}
with~$c_j, b_j$ as in \eqref{6}. 

Note that~$nA_n(k_j/n) =n A(a_j) +\bigo( \log n) = n \max_{x \in [0,1]} A(x) +\bigo( \log n)$ by~\eqref{A3i}. Therefore, the leading terms of~$(Z_{n,j})_{j \in J}$ are the ones at which the sequence~$(B(a_j))_{j \in J}$ attains the maximum. Recall that 
\be{
J_1=\{j\in J: B(a_j) =\max_{k\in J} B(a_k) \}.
}
 Let~$\delta \in (0,\delta_*)$ be any fixed constant. By the above,   \eqref{11} and  \eqref{15} yield that,  if~$J_1 \neq J$,
\bes{
	\mel\pp\bcls{ \text{$|X/n-a_j| > \delta$  for all~$j \in J_1$}} \\
	& \leq \exp(-cn) + \frac{\sum_{j  \in J\setminus J_1} Z_{n,j}}{\sum_{j \in J} Z_{n,j}}  \\
	& \leq \exp(-cn) + \bigo_{\delta}(1) \sum_{j \in J \setminus J_1} \frac{\sigma_{j_1}}{\sigma_j} \exp(n\sigma_*(B(a_j)-B(a_{j_1}))) \\
	& \leq \exp(-c_1n\sigma_*),
}
where~$c$ and~$c_1$ are positive constants depending on~$\delta$, and~$j_1$ is an element of~$J_1$. Similarly, if~$J_2 \neq J_1$,
\ben{ \label{16}
\pp \bcls{\text{$|X/n-a_j| > \delta$ for all~$j \in J_2$}} \leq \bigo_{\delta}(1) \max_{j_1 \in J_1 \setminus J_2} \sigma_{j_2}/\sigma_{j_1},
}
with~$j_2$ an element of~$J_2$.  The two above inequalities and \eqref{11} yields the concentration estimates in \eqref{7}, \eqref{8} and \eqref{9}.

 We now prove  the weak law of large numbers  \eqref{5} and the estimate \eqref{10}. By~\ref{A4} for all~$i,j \in J_2$
\ben{ \label{17}
|nA_n(k_i/n)-nA_n(k_j/n)-(\nu_i-\nu_j)| = \bigo(1/n\sigma_*). 
}
Hence, it follows from \eqref{15} that  for any~$\delta \in (0, \delta_*)$, and for all~$i, j \in J_2$
\bea{
\frac{Z_{n,i}(\delta)}{Z_{n,j}(\delta)} &=& \left(1+ \bigo_{\delta}(\tau_*)  \right) \frac{q_i}{q_j} \exp(n[A_n(k_i/n)-A_n(k_j/n)])\\
&=&  \left(1+ \bigo_{\delta}(\tau_*)  \right) \frac{q_i}{q_j}   \exp(\nu_i-\nu_j+ \bigo(1/n\sigma_*))\\
&=& \left((1+ \bigo_{\delta}(\tau_*)    \right) \frac{q_i e^{\nu_i}}{q_je^{\nu_j}}, 
}
where 
\be{
\tau_*=  \tau_{j} = \frac{(\log n)^{2m_*+1}}{n\sigma_*}  +  \frac{\sigma_* \log n}{\sigma_{j}} \I[J_2 \neq J_*],
}
(note here that~$\sigma_{i}=\sigma_{j}$ and hence $\tau_i=\tau_j$). 
Therefore,
\ben{ \label{18}
\frac{Z_{n,j}(\delta)}{\sum_{k \in J_2} Z_{n,k}(\delta)} = p_j + \bigo_{\delta}(\tau_*),
}
where 
\be{
	p_j =\frac{q_je^{\nu_j}}{\sum_{k \in J_2} q_k e^{\nu_k} }.
}
Combining \eqref{18} and \eqref{16}, we have 
\be{
X/n \stackrel{\law}{\longto} \sum_{j \in J_2} p_j \, \delta_{a_j},
}
and for all~$j \in J_2$
\be{
\pp[|X/n-a_j| \leq  \delta_*] = p_j +\bigo(\tau_*) + \bigo(1)\max_{j_1 \in J_1 \setminus J_2} \sigma_{j_2}/\sigma_{j_1}.
}
The proof of \eqref{5} and \eqref{10} is complete. 
\end{proof}

\subsection{Stein's method}

We first state and derive what is needed to implement Stein's method for target distributions of the form~$p(y)\propto\exp(cy^{2m}+by)$. The following result is a consequence of the general approach of \cite{CS}.

	\begin{lemma}  \label{lem1}
		Let~$m$ be a positive integer, and let~$Y$ be a random variable with density function~$p(y)\propto\exp(cy^{2m}+by)$ with~$c<0$ and~$b \in \IR$. Then there exists a positive constant~$K=K(c,b,m)$ such that  for any random variable~$W$,
		\be{
			\dw(W,Y) \leq  \sup_{f \in C^{2}_K(\IR)} \Big|\IE\Big\{f'(W)+\tfrac{p'(W)}{p(W)}f(W) \Big \} \Big |,	
		}
		where 
		\be{
			C^2_K(\IR) =\bclc{f \in C^2(\IR): \norm{f}_{\infty}, \norm{f'}_{\infty},\norm{f''}_{\infty} \leq K }, 
		}
		with~$C^2(\IR)$ the space of twice differentiable functions and~$\norm{g}_{\infty} =\sup_{x \in \IR} |g(x)|$.	
	\end{lemma}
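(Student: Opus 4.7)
My plan is to apply the density approach to Stein's method within the general framework of \cite{CS}. The density $p$ is smooth and strictly positive on $\IR$; moreover, $(\log p)''(y) = 2m(2m-1)c\,y^{2m-2} \leq 0$ for all $y$ since $c < 0$, so $p$ is log-concave with super-Gaussian tails, and all polynomial moments of $Y$ are finite. The associated Stein operator is
\be{
  \mathcal{A}f(y) = f'(y) + \psi(y) f(y), \qquad \psi(y) = \frac{p'(y)}{p(y)} = 2mc\,y^{2m-1} + b,
}
and integration by parts shows that $\IE \mathcal{A}f(Y) = 0$ for every sufficiently regular $f$.

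For each Lipschitz test function $h$ with $\norm{h'}_\infty \leq 1$, the Stein equation $\mathcal{A}f = h - \IE h(Y)$ admits the explicit bounded solution
\be{
  f_h(y) = \frac{1}{p(y)} \int_{-\infty}^y \bclr{h(t) - \IE h(Y)} p(t)\, dt,
}
and substituting $W$ for $y$ and taking expectations gives $\IE h(W) - \IE h(Y) = \IE[f_h'(W) + \psi(W) f_h(W)]$. Taking supremum over $h$ therefore reduces $\dw(W,Y)$ to a supremum of $|\IE \mathcal{A}f_h(W)|$, so the lemma reduces to showing that $f_h \in C^2_K(\IR)$ with a constant $K = K(c, b, m)$ uniform in $h$.

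The uniform bounds $\norm{f_h}_\infty, \norm{f_h'}_\infty, \norm{f_h''}_\infty \leq K$ are essentially the content of the general Stein-solution estimates in \cite{CS}, whose hypotheses I would verify in the present setting. After replacing $h$ with $h - h(0)$ so that $|h(t)| \leq |t|$, the required estimates follow from the Laplace-type asymptotic
\be{
	\int_y^\infty p(t)\, dt \sim \frac{p(y)}{|\psi(y)|} \qquad \text{as } y \to \infty
}
(together with its analogue at $-\infty$), which can be obtained by a single integration by parts using $p(t) = p'(t)/\psi(t)$. This asymptotic yields $|f_h(y)| = \bigo(1/(1+|\psi(y)|))$; the identities $f_h'(y) = h(y) - \IE h(Y) - \psi(y) f_h(y)$ and $f_h''(y) = h'(y) - \psi'(y) f_h(y) - \psi(y) f_h'(y)$ then propagate the bound to $f_h'$ and $f_h''$.

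The technical obstacle is the second derivative: both $\psi$ and $\psi'$ grow polynomially, so naively $\psi f_h'$ and $\psi' f_h$ appear to blow up. The key point is the precise cancellation between $\psi(y) f_h(y)$ and $h(y) - \IE h(Y)$ forced by the Mills-ratio asymptotic, which shows that $f_h'$ decays at exactly the rate needed to keep $\psi f_h'$ bounded. Once this cancellation is verified, one sets $K = K(c, b, m)$ to be the maximum of the three resulting uniform bounds, completing the proof.
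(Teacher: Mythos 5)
Your proposal is correct and follows essentially the same route as the paper: both invoke the Chatterjee--Shao density approach, reduce the claim to uniform $C^2$ bounds on the Stein solution $f_h$, and obtain those bounds from the Mills-ratio-type tail estimate $\int_y^\infty p \sim p(y)/|\psi(y)|$ proved by a single integration by parts using $p = p'/\psi$ (the paper packages this as the verification that the constants $d_1,d_2,d_3$ from \cite[Lemma 4.1]{CS} are finite). The cancellation you flag for $\psi f_h'$ is exactly the content of the $d_3$ estimate carried out in the paper.
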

	
	\begin{proof} Let~$h$ be Lipschitz continuous and consider the Stein equation
		\ben{ \label{19}
			f'(w)+p'(w)f(w)/p(w) =h(w) -\IE h(Y).
		}
		\cite[Lemma 4.1]{CS} showed that the solution~$f_h$ of the functional equation \eqref{19} belongs to~$C^2(\IR)$ and satisfies 
		\be{
			\norm{f}_{\infty}\vee \norm{f'}_{\infty}\vee \norm{f''}_{\infty} \leq (1+d_1)(1+d_2)(1+d_3)\norm{h'}_{\infty},
		}
		where 
		\be{
			d_1 = \sup_{x \in \IR} \frac{\min \{P(x),1-P(x)\}}{p(x)}, 
			\quad d_2 = \sup_{x \in \IR} \frac{\min \{P(x),1-P(x)\}p'(x)}{p^2(x)},
		}
		and~$d_3 = \sup_{x \in \IR}  Q(x)$,  with~$P(x)=\int_{-\infty}^x p(t)dt$ and 
		\bm{
			Q(x) 
			= \frac{1+|(p'/p)'(x)|}{p(x)}\min\bclc{\IE\clc{Y\I[Y \leq x]} + \IE|Y| P(x),\\
				\IE\{Y\I[Y > x]\} + \IE|Y| (1-P(x))}.
		}
		We now show that~$d_3$ is a finite constant depending only on~$c$, $b$ and~$m$. The proof for~$d_1$ and~$d_2$ is  similar but simpler, hence omitted. It is clear that 
		\ben{ \label{20}
			d_3 = \max \bbbclc{\sup_{x \leq -C} Q(x), \sup_{|x| \leq C} Q(x), \sup_{x \geq C} Q(x) }, 
			\quad C= 1+ \frac{4 + |b|}{m|c|}.
		}
		First, consider~$x\geq C$; since~$(p'/p)'(x)=2m(2m-1)cx^{2m-2}$ and~$\IE Y <\infty$,
		\ben{ \label{21}
			Q(x) \leq C_1 \frac{x^{2m-2} \int_x^{\infty} y p(y) dy}{p(x)} = C_1 \frac{x^{2m-2} \int_x^{\infty} y q(y) dy}{q(x)},
		}
		with~$C_1=C_1(c,b,m)$ a finite constant and~$q(x) = \exp(cx^{2m}+bx)$.
		Using integration by parts and the fact that~$q'(y)=q(y)(2mc y^{2m-1} +b)<0$ for~$y \geq x\geq C$,
		\bes{
			\int_x^{\infty} y q(y) dy &= \int_x^{\infty} \frac{y}{2mc y^{2m-1} +b} d(q(y)) \leq \int_x^{\infty} \frac{y^{2-2m}}{mc} d(q(y))\\
			&=\frac{x^{2-2m}q(x)}{m|c|} + \int_x^{\infty} \frac{y^{1-2m}(2-2m)}{mc} q(y) dy\\
			&\leq \frac{x^{2-2m}q(x)}{m|c|} + \frac{1}{2} \int_x^{\infty}  yq(y) dy,
		}
		and hence
		\be{
			\int_x^{\infty} y q(y) dy \leq \frac{2x^{2-2m}q(x)}{m|c|}.
		}
		Combining this with \eqref{21} we have~$\sup_{x \geq C} Q(x) \leq 2C_1/(m|c|)$. The same inequality holds for~$\sup_{x \leq -C} Q(x)$. Since~$Q$ is continuous, it also follows that~$\sup_{|x| \leq C} Q(x) < \infty$ . Hence, by \eqref{20}, we have~$d_3<\infty$.
		
		Finally, considering \eqref{19} with~$w$ replaced by~$W$ and taking expectation, the claim easily follows.
	\end{proof}

	\begin{lemma} \label{lem2}
		\begin{itemize}
			\item [(i)] Let~$W$, $Y$ and~$Z$ be random variables such that~$|W-Y| \leq |Z|$ almost surely.  Then 
			\be{
				\dk(W,Y) \leq  \inf_{\delta >0}\Big(\sup_{s \in \IR}\IP[s<Y \leq s+\delta]+ \IP[|Z| \geq \delta] \Big).
			}
			\item[(ii)] Let~$Y$ be a random variable satisfying 
			\be{
				M_Y:=\sup_{\delta >0} 	\sup_{s \in \IR} \frac{1}{\delta} \IP[s \leq Y\leq s+ \delta ]  < \infty. 
			}
			Then there exists a positive constant~$C=C(M_Y)$, such that for all random variable~$W$, 
			\be{
				\dk(W,Y)\leq C \dw(W,Y)^{1/2}.
			}
		\end{itemize}
	\end{lemma}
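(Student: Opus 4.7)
\medskip

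\noindent\emph{Plan for Lemma~\ref{lem2}.}
For part~(i) the plan is a standard one-line coupling/smoothing argument. Fix $t \in \IR$ and $\delta > 0$, and use the almost-sure bound $|W-Y| \leq |Z|$ to write
\be{
  \{W \leq t\} \cap \{|Z| < \delta\} \subseteq \{Y \leq t + \delta\},
  \qquad
  \{Y \leq t - \delta\} \cap \{|Z| < \delta\} \subseteq \{W \leq t\}.
}
This yields the two-sided sandwich $\IP[Y \leq t-\delta] - \IP[|Z|\geq \delta] \leq \IP[W \leq t] \leq \IP[Y \leq t+\delta] + \IP[|Z|\geq \delta]$. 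Subtracting $\IP[Y\leq t]$, taking absolute values, then the supremum over $t$, and finally the infimum over $\delta$ gives the stated bound. No subtlety here.

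For part~(ii), the plan is the classical smoothing trick that converts a Wasserstein bound to a Kolmogorov bound under a bounded-density-type hypothesis on $Y$. For $t \in \IR$ and $\delta > 0$, define the Lipschitz tent function
\be{
  h_{t,\delta}(w) \;=\; \begin{cases} 1, & w \leq t,\\ 1 - (w-t)/\delta, & t < w < t+\delta,\\ 0, & w \geq t+\delta,\end{cases}
}
so that $\|h_{t,\delta}'\|_\infty = 1/\delta$ and $\I[w\leq t] \leq h_{t,\delta}(w) \leq \I[w \leq t+\delta]$. Then
\be{
  \IP[W \leq t] \;\leq\; \IE h_{t,\delta}(W) \;\leq\; \IE h_{t,\delta}(Y) + \dw(W,Y)/\delta \;\leq\; \IP[Y \leq t+\delta] + \dw(W,Y)/\delta,
}
and, using $h_{t-\delta,\delta}$ for the reverse direction, the same argument gives $\IP[W \leq t] \geq \IP[Y \leq t-\delta] - \dw(W,Y)/\delta$. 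The small-ball bound $M_Y$ then yields $|\IP[Y \leq t\pm\delta] - \IP[Y \leq t]| \leq M_Y \delta$, hence
\be{
  |\IP[W \leq t] - \IP[Y \leq t]| \;\leq\; M_Y\,\delta + \dw(W,Y)/\delta.
}
Taking the supremum over $t$ and optimizing with $\delta = (\dw(W,Y)/M_Y)^{1/2}$ gives $\dk(W,Y) \leq 2\sqrt{M_Y}\,\dw(W,Y)^{1/2}$, i.e.\ the claim with $C = 2\sqrt{M_Y}$.

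\noindent Neither step contains a genuine obstacle; the only item requiring mild care is to pick the Lipschitz tent on the correct side in part~(ii) so that both the upper and lower sandwich bounds use a test function with Lipschitz constant exactly $1/\delta$. The small-ball hypothesis on $Y$ is used only to control the two boundary strips $(t-\delta,t]$ and $(t,t+\delta]$ by $M_Y \delta$, so no smoothness or density assumption on $W$ is required.
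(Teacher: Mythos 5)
Your proof is correct and matches the paper's: part (i) is the same two-sided sandwich $\IP[Y \leq s-\delta] - \IP[|Z|\geq\delta] \leq \IP[W\leq s] \leq \IP[Y\leq s+\delta] + \IP[|Z|\geq\delta]$ followed by subtracting $\IP[Y\leq s]$ and taking suprema. For part (ii) the paper simply cites \cite[Proposition 1.2]{R}, and your tent-function smoothing argument with the optimization $\delta = (\dw(W,Y)/M_Y)^{1/2}$ is precisely the standard proof of that cited result, so there is nothing to add.
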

  	\begin{proof}
		Since~$Y-|Z| \leq W \leq Y + |Z|$, we have for all~$s\in \IR$ and~$\delta>0$
		\be{
			\IP[Y \leq s- \delta ] - \IP[|Z| \geq \delta]	 \leq \IP[W \leq s] \leq \IP[Y \leq s+ \delta ] + \IP[|Z| \geq \delta].	
		}
		Subtracting~$\IP[Y\leq s]$ everywhere and taking supremum over~$s$, $(i)$ now easily follows.  Item~$(ii)$ is proved by \cite[Proposition 1.2]{R}.
	\end{proof}
  
\subsection{Distributional limit theorem}

\begin{proof}[Proof of Theorem~\ref{thm3}]
We shall prove that for all~$j \in J$ and~$ l \in \IN$,
\ben{ \label{22}
\IE\bclc{|X/n-a_j|^l \given |X-na_j| \leq n\delta_*} =\bigo\bclr{1/(n\sigma_j)^l},
}
and for~$j \in J$
\besn{ \label{23}
		\mel \dw \bclr{\law\clr{W_j\given |X-na_j| \leq n\delta_*}, \law(Y_j)} \\
		&= \bigo(1/(n\sigma_j)) +\bigo(\sigma_*/\sigma_j \I[j \in J\setminus J_*]),
}
where 
\be{
W_j= \sigma_j(X-na_j), \qquad Y_j \propto \bp_j \propto \exp(c_jx^{2m_j}+b_j x),
}
with~$c_j$ and~$b_j$ given as in \eqref{6}. Let~$\tX_j$ be a random variable having the conditional distribution of~$X$ given~$|X-na_j| \leq n\delta_*$; that is,
\ben{ \label{24}
	\pp [\tX_j =k] =\frac{\exp(H_n(k/n))}{Z_{n,j}}, \qquad \ell_j \leq k \leq L_j, 
}
where
\be{
	\ell_j=\lceil n(a_j-\delta_*) \rceil, \quad L_j=  [n(a_j+\delta_*)], \quad Z_{n,j} = Z_{n,j}(\delta_*).
}
Then 
\ben{ \label{25}
\IE\bclc{|X/n-a_j|^l \given |X-na_j| \leq n\delta_*} =\IE\bclc{|\tX/n-a_j|^l},
}
and from  Lemma~\ref{lem1}, we have
\besn{ \label{26}
	\mel\dw \bclr{\law\clr{W_j\given |X-na_j| \leq n\delta_*}, \law(Y_j)}  \\
	&\leq\sup_{f \in C^2_K(\IR)} 
	\bbabs{\IE\bclc{f'(W_j) + \tfrac{\bp'_j(W_j)}{\bp_j(W_j)}f(W_j) \given |X-na_j| \leq n\delta_*}}   \\
	& = \sup_{f \in C^2_K(\IR)} 
	\bbabs{\IE\bclc{f'(\tW_j) + \tfrac{\bp'_j(\tW_j)}{\bp_j(\tW_j)}f(\tW_j)}}
}
where~$K=K(c_j,b_j,m_j)$ is a finite constant, and 
\be{
	\tW_j= 
		\sigma_j(\tX-na_j).
}
Given~$f \in C^2_K(\IR)$, we define the function~$g:\R \rightarrow \R$ as
\be{ 
	g(x) =  			f\left(\sigma_j(x-na_j)  \right).
	}
For any bounded function~$h:\R\to\R$ and~$\delta>0$, let~$\D_\delta h(x)=h(x+\delta)-h(x)$; we have
\ben{ \label{27}
\D_1 g(\tX_j) = \D_{\sigma_j}f(\tW_j), \qquad g(\tX_j) = f(\tW_j).
}
For~$x=(\ell_j-1)/n,\ell_j/n,\ldots,(L_j-1)/n$, let 
\be{
	D_n(x)=\D_{1/n}H_n(x)= n\D_{1/n}A_n(x) +n\sigma_* \D_{1/n} B_n(x),
}
and also let~$D_n((\ell_j-1)/n) = 0$. Note that by \eqref{24}, for~$\ell_j-1\leq k \leq L_j-1$,
\be{
\frac{\pp[\tX_j=k+1]}{\pp[\tX_j=k]} = \exp \left(D_n(k/n)\right).
}
Hence,  \eqref{27} and  straightforward calculations now yield
\ben{\label{28}
\IE\D_{\sigma_j} f(\tW_j) = \IE\D_1 g(\tX_j)  = \IE\bclc{ g(\tX_j)\bcls{\exp \bclr{ -D_n(\tsfrac{\tX_j-1}{n})}-1 }}  + r_1, 
}
where
\be{ 
	r_1= \frac{1}{Z_{n,j}}\bcls{-g(L_j+1)\exp \bclr{ H_n(L_j/n)}+g(\ell_j) \exp\bclr{  H_n(\ell_j/n)} }.
}
By \eqref{14}, we have 
\be{
\max\{H_n(L_j/n), H_n(\ell_j/n) \} \leq H_n(k_j/n) -cn, 
} 
for some~$c>0$. Moreover,  
$$|H_n(k_j/n) - nA_n(k_j/n) -n\sigma_*B(a_j)| = n\sigma_*|B_n(k_j/n)-B(a_j)| =\bigo(n\sigma_*).$$
 Therefore, 
\be{
	\max\{H_n(L_j/n), H_n(\ell_j/n)  \} \leq nA_n(k_j/n)+n\sigma_*B(a_j)- cn/2.
}
Combining this estimate with \eqref{15}, we obtain
\ben{ \label{29}
r_1 \leq \norm{f}_{\infty}\exp(-cn/4).
} 
Moreover, by Taylor's expansion, 
\ben{ \label{30}
	\left|  \frac{1}{\sigma_j}\D_{\sigma_j} f(\tW_j) - f'(\tW_j) \right| \leq  \sigma_j \norm{f''}_{\infty}.
} 
It follows from \eqref{28}, \eqref{29} and \eqref{30}  that 
\bmn{\label{31}
	\left|\IE\bbbclc{  f'(
		\tW_j) - \frac{1}{\sigma_j} 
		\bbclr{\exp \bclr{ - D_n(\tfrac{\tX_j-1}{n}) }-1 } f(\tW_j)} \right|  \\
	  \leq \norm{f}_{\infty}\exp(-cn/4) + \sigma_j\norm{f''}_{\infty}.
}
We now estimate the error when replacing~$\sigma_j^{-1} (\exp ( - D_n(\tfrac{X-1}{n}))-1)$ by $ \bp_j'(W_j)/\bp_j(W_j)$ in \eqref{31}. For~$|k-k_j| \leq \delta_* n$, using~\eqref{A3ii}  and Taylor's expansion we have 
\bes{
	\mel A_n(k/n)-A_n((k-1)/n)-n^{-1}A'(k/n)\\
	&=[A_n(k/n)-A_n((k-1)/n)]-[A(k/n)-A((k-1)/n)]\\
	&\quad + A(k/n)-A((k-1)/n)-n^{-1}A'(k/n) = \bigo(n^{-2}).
}
Thus 
\be{
n \D_{1/n} A_n((k-1)/n) =A'(k/n) +\bigo(n^{-1}).
}
Similarly, 
\be{
n \D_{1/n} B_n((k-1)/n) =B'(k/n) +\bigo(n^{-1}).
}
Therefore,
\ben{ \label{32}
	|D_n((k-1)/n) - [A'(k/n)+\sigma_* B'(k/n)]| =\bigo(n^{-1}).
}
Furthermore,~$|e^u-e^v|=e^v|e^{u-v}-1| \leq 2e^v|u-v|$ when~$|u-v |$ is sufficiently small. Hence, by using \eqref{32} we have for all~$ n$ large enough
\besn{\label{33}
\mel\bigl|\exp ( - D_n((k-1)/n) - \exp(-A'(k/n)-\sigma_* B'(k/n)) \bigr|  \\
&\leq 2 \max_{|x-a_j|\leq \delta_*} \exp(|A'(x)|+\sigma_* |B'(x)|)   \\
& \qquad \times |D_n((k-1)/n) - [A'(k/n)+\sigma_* B'(k/n)]| =\bigo(n^{-1}).
}
Moreover, by applying Taylor's expansion to the function~$e^{-A'(x)-\sigma_*B'(x)}$ around~$x=a_j$ and noting that~$A^{(k)}(a_j)=0$ for all~$1 \leq k \leq 2m_j-1$,   
\bes{
	\exp(-A'(k/n)-\sigma_* B'(k/n))
  & =1-\frac{A^{(2m_j)}(a_j)}{(2m_j-1)!}(k/n-a_j)^{2m_j-1}  -\sigma_* B'(a_j) \\
	&\qquad+\bigo\big((k/n-a_j)^{2m_j} + \sigma_* |k/n-a_j|  \big).
}
Note further that 
\bes{
\mel\frac{A^{(2m_j)}(a_j)}{(2m_j-1)!}(\tX_j/n-a_j)^{2m_j-1}  +\sigma_* B'(a_j) \\
& = 2m_j c_j (\tX_j/n-a_j)^{2m_j-1} +\sigma_* B'(a_j) \\
& = \sigma_j (2m_jc_j\tW_j^{2m_j-1}+b_j)  -\sigma_j b_j +\sigma_*B'(a_j)\\
&= \sigma_j \frac{\bp_j'(\tW_j)}{\bp_j(\tW_j)}  +\bigo(\sigma_* \I[j \in J \setminus J_*]),
}
since~$\bp_j'(w)/ \bp_j(w)=2m_jc_jw^{2m_j-1} +b_j$, and
\be{
\tW_j = \sigma_j(\tX_j-na_j)= \sigma_j^{-1/(2m_j-1)} (\tX_j/n-a_j), 
}
and 
\be{
|\sigma_j b_j -\sigma_*B'(a_j)| =\begin{cases}
	0 &\textrm{if } j \in J_*\\
	|\sigma_*B'(a_j)| = \bigo(\sigma_*)& \textrm{if } j \in J \setminus J_*.
\end{cases}
}
Therefore,
\besn{ \label{34}
	\mel\exp\bigl(-A'(\tX_j/n)-\sigma_* B'(\tX_j/n)\bigr)-1 \\
  & =-\sigma_j \frac{\bp_j'(\tW_j)}{\bp_j(\tW_j)} + \bigo(\sigma_* \I[j \in J\setminus J_*]) \\
  &\quad+\bigo \bigl((\tX_j/n-a_j)^{2m_j} +\sigma_* |\tX_j/n-a_j|  \bigr).
}
It follows from \eqref{33} and \eqref{34}, and the fact that~$\sigma_* \leq \sigma_j$ that 
\besn{ \label{35}
	\mel\IE \bbclc{ \bbabs{ |\sigma_j^{-1}\big(\exp ( - D_n(\tfrac{\tX_j-1}{n})) -1\big) f(\tW_j)+ \tfrac{\bp_j'(\tW_j)}{\bp_j(\tW_j)} f(\tW_j) }} \\
	& \leq  C \norm{f}_{\infty}  \IE\bbclc{\bclr{\sigma_j^{-1}(\tX_j/n-a_j)^{2m_j} + |\tX_j/n-a_j|}}  \\
	&  \quad  + C\sigma_j^{-1}\sigma_* \I[j \in J\setminus J_*], 
}
where~$C$ is a positive constant. In order to estimate the above term, we analyse~$\pp[\tilde{X}_j=k]$. By Proposition~\ref{prop1}, if~$|k/n-a_j| \leq \delta_*$, we have
\besn{ \label{36}
	\mel \IP[\tX_j=k] =\frac{\pp[X_j=k]}{Z_{n,j}(\delta_*)} \\
	  &= \bigo(1) \sigma_j \exp \left(n \left(A_n(k/n) + \sigma_* B_n(k/n) - A_n(k_j/n) - \sigma_* B(a_j) \right) \right).  
}
By \eqref{A3ii},
\be{
	\babs{[A_n \left(k/n\right) - A_n \left(k_j/n\right)] - [A \left(k/n\right) - A \left(k_j/n\right)]} = \bigo_{\delta_*}(|k-k_j|/n^2) = \bigo_{\delta_*}(1/n).
}
Moreover, using Taylor expansion and~\ref{A1} 
\ba{
|A(k/n)-A(k_j/n)| & \leq | A(k/n) - A(a_j)|+ |A(k_j/n)-A(a_j)|  \\ & \leq  \alpha_j(k/n-a_j)^{2m_j} + \bigo(1/n),
}
where 
\be{
	\alpha_j:=\max_{|x-a_j| \leq \delta_*} \frac{ A^{(2m_j)}(x)}{(2m_j)!} <0.
}
Therefore,
\be{
A_n(k/n) \leq A_n(k_j/n) + \alpha_j(k/n-a_j)^{2m_j} + \bigo(1/n).
}
By~\eqref{A3i}, 
\be{
|B_n(k/n)-B(a_j)| \leq |B_n(k/n)-B(k/n)| +|B(k/n)-B(a_j)|=\bigo(|k/n-a_j|).
} 
Using the last two display equations, \eqref{36} and~$ \sigma_* \leq \sigma_j$, we have 
\be{
	\IP[\tX_j=k] \leq C \sigma_j \exp \left( \alpha_j n (k/n-a_j)^{2m_j} + Cn \sigma_j |k/n-a_j| \right)
}
for some finite constant~$C$. Next, by using~$\sigma_j^{2m_j} =n^{1-2m_j}$ and  integral approximations, we have for all~$l \in \IN$, 
\bes{
\mel\E\bclc{|\tX_j/n-a_j|^l} \\
& \leq C \sigma_j \sum_{k: |k/n-a_j| \leq \delta_*} |k/n-a_j|^{l}  \exp \left(\alpha_j n\left(k/n-a_j\right)^{2m_j} + C n \sigma_j   |k/n-a_j| \right)\\
&=\bigo(\sigma_j)  \int_{-n\delta_*}^{n\delta_*} (|x|/n)^l \exp(\alpha_j (x \sigma_j)^{2m_j} + C|x\sigma_j|) dx\\
&=\bigo((n\sigma_j)^{-l} )\int_{-n\sigma_j\delta_*}^{n \sigma_j \delta_*} |y|^l \exp(\alpha_j y^{2m_j}+C|y|) dy= \bigo((n\sigma_j)^{-l}),
}
since~$\alpha_j <0$. This estimate and \eqref{25} implies \eqref{22}. In particular, we  have 
\bes{ 
\mel\IE \left\{ \sigma_j^{-1}(\tX_j/n-a_j)^{2m_j} +|\tX_j/n-a_j| \right\} \\
&=\bigo(\sigma_j^{-1}(n\sigma_j)^{-2m_j}) + \bigo((n\sigma_j)^{-1}) =  \bigo\bclr{(n\sigma_j)^{-1}},
}
where we used that~$\sigma_j^{2m_j} =n^{1-2m_j}$.  Therefore, by \eqref{35},
\be{
 \IE \Bigl\{ \Bigl |\sigma_j^{-1}\big(\exp ( - D_n(\tfrac{\tX_j-1}{n})) -1\big) f(\tW_j)  + \tfrac{\bp_j'(\tW_j)}{\bp_j(\tW_j)} f(\tW_j) \Bigr|  \Bigr\} =\bigo\bclr{\norm{f}_{\infty}/(n\sigma_j)}.
}
Combining the above inequality with  \eqref{31} we yield that for all~$K>0$
\bes{ 
	\mel\sup_{f \in C^2_K(\IR)}   \bbabs{\IE \bbclc{f'(\tW_j) + \tfrac{\bp_j'(\tW_j)}{\bp_j(\tW_j)}f(\tW_j)}} \\
	&= \bigo(K/(n\sigma_j)) +\bigo(\sigma_*) +\bigo(\sigma_j/\sigma_* \I[j \in J\setminus J_*])\\
	& =\bigo(K/(n\sigma_j))+\bigo(\sigma_j/\sigma_* \I[j \in J\setminus J_*]).
}
Then the desired estimate  \eqref{23}  follows from this bound and \eqref{26}. 
\end{proof}

\subsection{Free energy }
\begin{proof}[Proof of Proposition~\ref{prop1}]
 Fix a constant~$\delta \in (0, \delta_*]$.  We aim to approximate 
	\be{
	Z_{n,j}(\delta):= \sum_{|k/n-a_j| \leq \delta} \exp(H_n(k/n)).	
}
		Let~$\eps \in (0,\delta)$ be a  suitably small constant chosen later (see \eqref{39}). For~$n\eps\leq |k-na_j| \leq n\delta$,  by~\eqref{A3ii}
\bes{
	\mel A_n(k/n) -A_n(k_j/n) = A(k/n) - A(k_j/n) + \bigo(1/n) \\
	&\leq \max_{\eps \leq |x-a_j|\leq \delta} (A(x)-A(a_j)) + \bigo(|k_j/n-a_j|) + \bigo(1/n)
	\leq -\eta
}
with~$\eta=\eta(\eps)>0$, since~$a_j$ is the unique maximizer of the smooth function~$A$ in~$[a_j-\delta_*,a_j+\delta_*]$. Therefore, since~$B_n$ is uniformly bounded by \eqref{13},
\bes{
	\mel H_n(k/n) - H_n(k_j/n)\\
	&= n[A_n(k/n) - A_n(k_j/n)] + n \sigma_*  [B_n(k/n) - B_n(k_j/n)] \leq -\eta n/2.
}
Thus 
\ben{ \label{37}
	\sum_{k=0}^n \frac{\exp(H_n(k/n))}{\exp(H_n(k_j/n))} \I[ \eps \leq |k/n-a_j| \leq \delta] \leq n \exp(-\eta n/2).
}
Next, we consider~$\sigma_j^{-1} \log n\leq |k-na_j| \leq n\eps$. By~\ref{A3} for all~$|k/n-a_j| \leq \delta_*$
\be{
A_n(k/n) -A_n(k_j/n) = A(k/n) - A(k_j/n) + \bigo(1/n).
}
Moreover, using Taylor expansion around~$a_j$ with~$A^{(m)}(a_j)=0$ for~$1 \leq m \leq 2m_j-1$, we have
\bes{
	A(k/n)-A(k_j/n)
	& = A(k/n) - A(a_j)+ A(a_j)- A(k_j/n) \\
	& = c_j(k/n-a_j)^{2m_j} + \bigo(|k/n-a_j|^{2m_j+1}) + \bigo(n^{-2}), 
}
where we recall that~$c_j=A^{(2m_j)}(a_j)/(2m_j)!$ and~$|k_j/n-a_j|^2 \leq n^{-2}$. It follows from the last two estimates that for all~$|k-na_j| \leq n \delta_*$
\besn{ \label{38}
  \mel A_n(k/n) -A_n(k_j/n) \\
  &= c_j(k/n-a_j)^{2m_j} + \bigo(|k/n-a_j|^{2m_j+1}) 
   + \bigo(n^{-1}).
}
In particular,   there exists a constant~$C_1=C_1(a_j,c_j,A)>0$ such that 
\be{
	A_n(k/n) -A_n(k_j/n) \leq c_j (k/n-a_j)^{2m_j} + C_1|k/n-a_j|^{2m_j+1} + C_1/n. 
}
By taking 
\ben{ \label{39}
\varepsilon = |c_j|/(2C_1),
}
we yield that for~$|k/n-a_j| \leq \varepsilon$,
\ben{ \label{40}
A_n(k/n) -A_n(k_j/n) \leq c_j (k/n-a_j)^{2m_j}/2 +  C_1/n,
}
by noting that~$c_j<0$.  On the other hand for all~$|k/n-a_j| \leq \delta_*$, by~\ref{A3}
\be{ 
	n\sigma_*  [B_n(k/n)-B_n(k_j/n)] 
  = n \sigma_* [B(k/n)-B(k_j/n)] +\bigo(\sigma_*).
}
Moreover,
\bes{
B(k/n)-B(k_j/n)&=B(k/n)-B(a_j)+B(a_j)-B(k_j/n) \\
&= B'(a_j) (k/n-a_j) + \bigo(|k/n-a_j|^2) + \bigo(n^{-1}).
}
Thus for  all~$|k/n-a_j| \leq \delta_*$,
\besn{ \label{41}
 \mel n\sigma_*  [B_n(k/n)-B_n(k_j/n)]\\
 & = \sigma_*(k-na_j) (B'(a_j)+\bigo(|k/n-a_j|)) +\bigo(\sigma_*).
}
Hence, using \eqref{40} and \eqref{41} and~$\sigma_* \leq \sigma_j$, and noting that~$\sigma_j^{2m_j}=n^{1-2m_j}$,
\be{
	H_n(k/n) - H_n(k_j/n) \leq\frac{c_j}{2}(\sigma_j(k-na_j))^{2m_j} + C\sigma_j |k-na_j| + C,
}
with~$C$ some positive constant. Therefore,
\besn{ \label{42}
	\mel\sum_{k=0}^n  \frac{\exp(H_n(k/n))}{\exp(H_n(k_j/n))} \I[ (\log n)/\sigma_j \leq |k-na_j| \leq n \eps]  \\
	&\leq \sum_{|k-na_j|\geq (\log n)/\sigma_j} \exp \left( \frac{c_j}{2}(\sigma_j(k-na_j))^{2m_j} + C\sigma_j |k-na_j| +C \right)  \\
	&=\bigo(1) \int_{|x| \geq (\log n)/\sigma_j } \exp\left(\frac{c_j}{2}(\sigma_j x)^{2m_j} + C|\sigma_j x| + C \right)dx 
  = \bigo(1/n).
}
Here, in the last inequality we have used~$\int_{|y| \geq \log n} \exp(c_jy^{2m_j}+Cy +C) dy = \bigo(n^{-2})$ since~$c_j<0$ and~$m_j \geq 1$. It follows from \eqref{37} and \eqref{42} that
\besn{ \label{43}
	Z_{n,j} (\delta)&= \big(1+\bigo(1/n)\big) \sum_{\substack{|k-na_j|\phantom{AA}\\ \leq (\log n)/\sigma_j}} \exp(H_n(k/n))  \\
	&= \big(1+\bigo(1/n)\big) \exp(H_n(k_j/n)) \sum_{\substack{|k-na_j|\phantom{AA}\\\leq (\log n)/\sigma_j}} \frac{\exp(H_n(k/n))}{\exp(H_n(k_j/n))}  \\
	&= \big(1+\bigo(\sigma_*)\big) \exp(nA_n(k_j/n)+n\sigma_* B(a_j)) \\
	&\hspace{2.5 cm} \times \sum_{\substack{|k-na_j|\phantom{AA}\\\leq  (\log n)/\sigma_j}} \frac{\exp(H_n(k/n))}{\exp(H_n(k_j/n))},
}
where for the last equation we used~\ref{A3} to derive that
\be{
	\big| nA_n(k_j/n)+n\sigma_* B(a_j) - H_n(k_j/n) \big| = \big| n\sigma_*\big(B_n(k_j/n) -B(a_j) \big) \big| =\bigo(\sigma_*).
}
By  \eqref{41}, if~$|k-na_j|\leq \sigma^{-1}_j \log n$ then
\bes{
	\mel n \sigma_*  [B_n(k/n)-B_n(k_j/n)]\\
	&= B'(a_j)\sigma_*(k-na_j) +\bigo((\log n)^2\sigma_*/n\sigma^2_j) +\bigo(\sigma_*)\\
	&=b_j\sigma_j(k-na_j) + \bigo(\sigma_* (\log n)/\sigma_j\I[\sigma_j  \neq \sigma_*]) \\
	& \quad +\bigo((\log n)^2\sigma_*/n\sigma^2_j) +\bigo(\sigma_*),
}	
since~$b_j=B'(a_j)\I[\sigma_j = \sigma_*]$. Similarly,  by \eqref{38} for~$|k-na_j|\leq \sigma^{-1}_j \log n$,
\bes{	
	n[A_n(k/n) -A_n(k_j/n)] &=c_jn(k/n-a_j)^{2m_j}+\bigo((\log n)^{2m_j+1}/n\sigma_j)\\
	&=c_j(\sigma_j(k-na_j))^{2m_j}+\bigo((\log n)^{2m_j+1}/n\sigma_j).
}
Therefore,
\ben{ \label{44}
H_n(k/n)-H_n(k_j/n) = c_j(\sigma_j(k-na_j))^{2m_j}+ b_j\sigma_j(k-na_j) + \bigo(\tau_j),
}
where 
\be{
 \tau_j =\frac{(\log n)^{2m_j+1}}{n\sigma_j}  + \frac{\sigma_* \log n}{\sigma_j}\I[\sigma_j \neq \sigma_*].
}
We now compute 
\besn{ \label{45}
\mel\sum_{\substack{|k-na_j|\phantom{AA}\\ \leq (\log n)/\sigma_j}} \exp\left(c_j(\sigma_j(k-na_j))^{2m_j} + b_j\sigma_j(k-na_j)\right)\\
&= \sum_{i \in \Gamma_n} \exp \left(c_j(i\sigma_j)^{2m_j} + b_j(i\sigma_j)\right),
}
where~$\Gamma_n=\{k-na_j: k\in \mathbb{Z}, \, |k-na_j| \leq  (\log n)/\sigma_j \}$. Denote by~$h(x)=\exp \left(c_jx^{2m_j} + b_jx\right)$. Then for all~$i \in \Gamma_n$, by Taylor expansion
$$\Big|h(i\sigma_j)-\sigma_j^{-1} \int_{i\sigma_j}^{(i+1) \sigma_j} h(x) dx \Big| \leq \sigma_j \sup_{i \sigma_j \leq x \leq (i+1)\sigma_j}|h'(x)|.~$$
Hence,
\besn{ \label{46}
\mel \Bigl|\, \sum_{i \in \Gamma_n}h(i\sigma_j)-\sigma_j^{-1} \int_{\R} h(x) dx \Bigr|\\
	&\leq\sigma_j \sum_{i \in \Gamma_n}\sup_{i \sigma_j \leq x \leq (i+1)\sigma_j}|h'(x)| + \int_{|x| \geq \log n} h(x) dx. 
}
Since~$h'(x)=\exp(c_jx^{2m_j}+b_jx)(2m_jc_jx^{2m_j-1}+b_j)$ with~$c_j<0$, we can find a positive constant~$C=C(c_j,m_j,b_j)$, such that if~$|y|\geq C$ then 
\be{
\sup_{x \in \R} |h'(x)| \leq C, \qquad  \sup_{y \leq x \leq y+1} |h'(x)| \leq \exp(-c_jy^{2m_j}/2).
}
Therefore, we have 
\bes{
\sum_{i \in \Gamma_n}\sup_{i \sigma_j \leq x \leq (i+1)\sigma_j}|h'(x)| &\leq 2C^2/\sigma_j + \sum_{i \in \Gamma_n} \exp(-c_j(i\sigma_j)^{2m_j}/2)\\
& \leq \bigo(1/\sigma_j) + \int_{|x| \leq (\log n)/\sigma_j} \exp(-c_j(x\sigma_j)^{2m_j}/2) dx\\
&=\bigo(1/\sigma_j),
}
which together  with \eqref{46} yields that 
\bes{
 \sum_{i \in \Gamma_n} h(i\sigma_j) &=  \sigma_j^{-1} \int_{\R} h(x) dx + \bigo(1) + \int_{|x|\geq \log n} h(x) dx\\
 &= \sigma_j^{-1} q_j + \bigo(1),
}
since~$q_j=\int_{\R}h(x) dx$. Combining this with \eqref{44} and \eqref{45} we obtain that
\bes{
	\mel\sum_{\substack{|k-na_j|\phantom{AA}\\ \leq (\log n)/\sigma_j}}\frac{\exp(H_n(k/n))}{\exp(H_n(k_j/n))} 
	= (1+\bigo(\tau_j)) \sigma_j^{-1} q_j   +\bigo(1)
=(1+\bigo(\tau_j)) \sigma_j^{-1} q_j,
}
since~$\tau_j \geq (\log n)^{2m_j+1}/(n\sigma_j) \geq \sigma_j$. 
We finally deduce  \eqref{15} from the above estimate and \eqref{43}.
\end{proof}

\section{maximum likelihood estimator of linear models} \label{sec2}
We first recall the generalized linear model \eqref{3} given as 
\be{ 
	\mu_n(\omega) =\frac{1}{Z_n} \exp (H_n(\omega)), \qquad \omega \in \Omega_n= \{+1,-1\}^n,
}
where 
\be{
Z_n =\sum_{\omega \in \Omega_n} \exp(H_n(\omega)),
}
and
\be{
H_n(\omega) = n\bclr{\beta_1f_1(\-\omega_+)+\ldots+\beta_l f_l(\-\omega_+)}, \quad \-\omega_+ =\frac{\babs{ \{i: \omega_i=1\}}}{n}. 
}
Since we construct the estimator for each parameter~$\beta_i$ considering the others~$(\beta_j)_{j\neq i}$ to be known, for simplicity we rewrite 
\ben{ \label{47}
H_n(\omega)=n\bclr{\beta f(\-\omega_+)+g(\-\omega_+)},
}
where $f, g: [0, 1] \rightarrow \IR$ are non-constant smooth enough and known functions. Our aim is to estimate the parameter~$\beta$.  In order to build the MLE of~$\beta$, we compute the log-likelihood function of the model as 
\be{
	L_n(\beta, \omega) =\frac{1}{n} \log \mu_n(\omega) = \beta f(\-\omega_+)+ g(\-\omega_+) - \phi_n(\beta)
}
with 
\be{
	\phi_n(\beta) = \frac{1}{n}\log Z_n.
}
Then the MLE of~$\beta$, denoted by~$\hbe_n$,  is a solution of  
\be{
	0=\partial_{\beta} L_n = f(\-\omega_+) - u(\beta), 
}
where 
\be{
	u(\beta)  = \partial_{\beta} \varphi_n  = \IE_{\beta} f(\-\omega_+)
}
with~$\IE_{\beta}$ the Gibbs expectation with respect to~$\mu_n$ for given~$\beta$. Note that 
\be{
	\partial_{\beta} u =\IE_{\beta}  f(\-\omega_+)^2 -\IE_{\beta} \clc{f(\-\omega_+)}^2 >0
}
since~$f$ is non-constant. Therefore, $u$ is strictly increasing in~$\beta$, and thus
\ben{ \label{48}
	\hbe_{n} = u^{-1}\left(f(\-\omega_+)\right).
}
Before stating the main result of this section, recall the entropy function~$I: [0,1] \rightarrow \R$ defined as~$I(a)=-a\log a+(a-1) \log (1-a)$ for~$a \in [0,1]$ with the convention that~$0\cdot\log 0 =0$.
\begin{theorem} \label{thm4} Consider the maximum likelihood estimator~$\hbe_n$ as in \eqref{48} of the linear model having Hamiltonian given by \eqref{47} with~$f, g  \in C^{2m_*+1}([0,1])$ and~$m_*\in \IN$.  Suppose that the function~$A:[0,1] \rightarrow \IR$ given as~$A(a)=\beta f(a)+g(a) +I(a)$ has finite maximizers, denoted by~$(a_j)_{j \in J}$, satisfying that~$A^{(k)}(a_j)= 0$ for all~$1 \leq k \leq 2m_j-1$ and~$A^{(2m_j)}(a_j)<0$ for all~$j \in J$, with~$(m_j)_{j \in J} \subset \N$ and~$m_*=\max_{j \in J} m_j$. Define 
\ba{
	J^+_1&= \{j \in J: f(a_j) = \max_{k \in J} f(a_k) \}, \quad 	J_2^+ = \{ j \in J_1^+: m_j  = \max_{k \in J^+_1} m_k \},\\
	J^-_1&= \{j \in J: f(a_j) = \min_{k \in J} f(a_k) \}, \quad 	J_2^- = \{ j \in J^-_1: m_j = \max_{k \in J_1^-} m_k\}.
} 	
Assume that~$(J_2^- \cup J_2^+) \subset J_* :=\{j \in J: m_j = m_*\}$, and assume that there exist $j \in J_2^-$ and $k \in J_2^+$ such that 
\ben{\label{49}
  f'(a_j)f'(a_k) \neq 0.
}
Then 
\be{
(\hbe_n-\beta)n^{1-1/(2m_*)} \stackrel{\law}{\longto} U,
}
where the distribution of~$U$ is given as in \eqref{66a}--\eqref{66c}.
\end{theorem}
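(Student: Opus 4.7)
The plan is to exploit the defining relation $u(\hbe_n)=f(\-\omega_+)$. Since $u$ is strictly increasing,
\be{
\IP_\beta\bcls{(\hbe_n-\beta)n^{1-1/(2m_*)}\leq t}=\IP_\beta\bcls{f(\-\omega_+)\leq u(\beta+t\sigma_{*,n})},
}
and the task splits into (i) analysing $u(\beta+t\sigma_{*,n})$ as a function of $t$, (ii) describing the distribution of $f(\-\omega_+)$ under $\IP_\beta$, and (iii) assembling these into the limiting CDF of $U$.

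For (i), observe that $u(\beta+t\sigma_{*,n})=\IE_{\beta+t\sigma_{*,n}}f(\-\omega_+)$ is a Gibbs expectation whose Hamiltonian is $n(\beta f+g)+n\sigma_{*,n}\cdot tf$; together with the entropy factor $\binom{n}{k}$, this is exactly the perturbed model \eqref{4} with $A(a)=\beta f(a)+g(a)+I(a)$ and $B(a)=tf(a)$. The function $B$ attains its maximum over $\{a_j\}_{j\in J}$ on $J_1^+$ when $t>0$ and on $J_1^-$ when $t<0$, and the hypothesis $J_2^\pm\subset J_*$ identifies the set $J_2$ of Theorem~\ref{thm1} with $J_2^\pm$. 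Theorem~\ref{thm3}, applied with $b_j=tf'(a_j)\I[m_j=m_*]$, says that conditional on $\-\omega_+$ being close to $a_j\in J_2^+$, the rescaled fluctuation $n^{1/(2m_*)}(\-\omega_+-a_j)$ converges to a random variable $Y_j^{+,t}$ with density $\propto\exp(c_jx^{2m_*}+tf'(a_j)x)$. A first-order Taylor expansion of $f$ around $a_j$, combined with the moment bound of Theorem~\ref{thm3}, then yields, for $t>0$,
\be{
u(\beta+t\sigma_{*,n})=f_{\max}+n^{-1/(2m_*)}\mu^+(t)+\lito(n^{-1/(2m_*)}),
}
where $f_{\max}=\max_{j\in J}f(a_j)$ and $\mu^+(t)=\sum_{j\in J_2^+}p_j^{+,t}f'(a_j)\,\IE Y_j^{+,t}$, with $p_j^{+,t}$ the weights of Theorem~\ref{thm1}; an analogous expansion holds around $f_{\min}$ for $t<0$.

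For (ii), the same theorems applied with $B\equiv 0$ give $J_2=J_*$ and, on the event $|\-\omega_+-a_j|\leq\delta_*$ for $a_j\in J_*$, $n^{1/(2m_*)}(\-\omega_+-a_j)\to Y_j^0$ with symmetric density $\propto\exp(c_jx^{2m_*})$, so $\IE Y_j^0=0$. Hence $f(\-\omega_+)=f(a_j)+f'(a_j)n^{-1/(2m_*)}Y_j^0+\bigo(n^{-1/m_*})$ on this event, which has unconditional weight $p_j^0$. For (iii), decompose $\IP_\beta[f(\-\omega_+)\leq u(\beta+t\sigma_{*,n})]$ by which $a_j$ the magnetisation is close to. For $t>0$: when $j\in J_*$ and $f(a_j)<f_{\max}$ the event holds with probability tending to $1$; when $j\in J_2^+$, both sides of the inequality equal $f_{\max}$ at leading order and the $n^{-1/(2m_*)}$-scale corrections decide, giving in the limit $\IP[f'(a_j)Y_j^0\leq\mu^+(t)]$. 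Summing produces, for $t>0$,
\be{
F_U(t)=\sum_{j\in J_*\setminus J_2^+}p_j^0+\sum_{j\in J_2^+}p_j^0\,\IP\bcls{f'(a_j)Y_j^0\leq\mu^+(t)},
}
and a symmetric computation for $t<0$ leaves only the $j\in J_2^-$ contributions (terms with $f(a_j)>f_{\min}$ now vanish). These pieces identify the distribution of $U$ as in \eqref{66a}--\eqref{66c}, and the nondegeneracy hypothesis \eqref{49} guarantees that $\mu^+(t)$ and $\mu^-(t)$ depend nontrivially on $t$, so $U$ is a genuine random variable rather than a pair of atoms.

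The main technical hurdle is the ``jump'' of $u(\beta+t\sigma_{*,n})$ at $t=0$: its leading-order limit is $\sum_{j\in J_*}p_j^0 f(a_j)$ at $t=0$ but $f_{\max}$ (resp.\ $f_{\min}$) for $t>0$ (resp.\ $t<0$). One must therefore simultaneously track both the discrete macroscopic mass at each $a_j\in J_*$ and the $n^{-1/(2m_*)}$-scale fluctuations of $f(\-\omega_+)$, and use the quantitative Wasserstein rates of Theorem~\ref{thm3} (converted to Kolmogorov bounds via Lemma~\ref{lem2}) to justify passage to the limit uniformly in the test value $t$; controlling the error in replacing $u(\beta+t\sigma_{*,n})$ by its limit expression to the required precision is the most delicate estimate.
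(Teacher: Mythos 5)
Your proposal follows essentially the same route as the paper's proof: reduce to $\IP_\beta[f(\-\omega_+)\leq u(\beta+t\sigma_{*,n})]$ by monotonicity of $u$, expand $u(\beta+t\sigma_{*,n})$ via Theorems~\ref{thm1}--\ref{thm3} applied to the perturbed model with $B=tf$ (so that $J_2\equiv J_2^\pm$ according to the sign of $t$), then decompose the unperturbed probability over the events $\{|\-\omega_+-a_j|\leq\delta_*\}$, $j\in J_*$, using the Wasserstein-to-Kolmogorov conversion of Lemma~\ref{lem2} to compare $f(\-\omega_+)$ with $f(a_j)+f'(a_j)(\-\omega_+-a_j)$. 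The limiting CDF you obtain agrees with \eqref{66a}--\eqref{66c} (your $\mu^\pm(t)$ is the paper's $e_\pm(t)$, and the indicator $\I[f'(a_j)\neq 0]$ is recovered since $\mu^+(t)>0$ for $t>0$ and $\mu^-(t)<0$ for $t<0$), and you correctly flag the delicate points the paper also handles quantitatively.
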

\begin{proof} For simplicity we omit the subscript~$n$ in all involved terms. Let 
	\be{
X=n\-\omega_+, \qquad \text{$\sigma_j = n^{1/(2m_j)-1}$ for~$j \in J$,} \qquad \sigma_*= n^{1/(2m_*)-1}.
}
For~$\gamma \in \IR$, we call~$\pp_{\gamma}$ the Gibbs measure at parameter~$\gamma$ and~$\IE_{\gamma}$ the corresponding expectation.  With~$X=n\-\omega_+$, we have for~$0\leq k \leq n$ that
\be{
\pp_{\beta}[X=k] \propto \exp\left(n\bclr{\beta f(k/n) +g(k/n)}\right) \binom{n}{k} = \exp \left( n A_n(k/n)\right),
}
where~$A_n: \{0, 1/n, \ldots, 1 \} \rightarrow \R$ is defined as
\be{
A_n(k/n) = \beta f(k/n) + g(k/n) + \frac{1}{n} \log \binom{n}{k}.
}
Recall that~$A(a)=\beta f(a)+g(a)+I(a)$.   Let~$B\in C^2([0,1])$ and define~$B_n : \{0, 1/n, \ldots, 1\} \rightarrow \IR$  as $B_n(k/n)=B(k/n)$ for~$0 \leq k \leq n$. As shown in Remark \ref{rem:a4},   there exist positive constants $\eps_*$, $\delta_*$, $C_*$ and real numbers $(\nu_j)_{j \in J_2}$ given in \eqref{nuj}  such that \ref{A1}--\ref{A4} hold. For any~$j \in J$, we define the event
\be{
\kA_j =\{|X/n-a_j| \leq \delta_* \},
}		 
and for~$t \in \R$ define the random variable
\be{
Y_j(t) \propto \exp(c_jx^{2m_j}+tb_jx),
}  
where
\be{
	c_j =\frac{A^{(2m_j)}(a_j)}{(2m_j)!}, \qquad b_j = B'(a_j) \I[j \in J_*]. 
}
 Fix~$t<0$, by the definition of~$\hbe$ and the monotonicity of~$u$ we have 
\bes{
	\mel\pp_{\beta}[(\hbe-\beta)/\sigma_{*} \leq t]	\\
  & = \pp_{\beta} \left[ u^{-1}(f(X/n)) \leq \beta+t \sigma_* \right] =\pp_{\beta}[f(X/n) \leq u(\beta +t\sigma_*)].
}
\paragraph{Part 1.} We start by estimating~$u(\beta+t\sigma_*)$. Note that~$u(\beta+t\sigma_*)=\E_{\beta+t\sigma_*} f(X/n) $, and in the application of Theorem~\ref{thm1}, the measure~$\pp_{\beta + t\sigma_*}$ corresponds to the case~$B=tf$. Hence, with~$t<0$, we have~$J_1\equiv J_1^-$ and~$J_2\equiv J_2^-$. Thus by Theorem~\ref{thm1},
\be{
X/n \stackrel{\pp_{\beta + t\sigma_*}}{\longto} \sum_{j \in J_2^-} p_j^-(t) \delta_{a_j},
}
where for~$j,i  \in J_2^-$, we have
\be{
p_j^-(t) =\frac{q_j(t) e^{\nu_j}}{\sum_{i \in J_2^-}q_i(t) e^{\nu_i}}, \qquad q_i(t) =\int_{\IR} \exp(c_ix^{2m_i}+tb_ix) dx, 
}
and recall from \eqref{nuj} that
\[
\nu_j= \log \sqrt{\frac{1}{(1-a_j)a_j}}.
\]
Note that~$b_j=B'(a_j)\I[j \in J_* ] =B'(a_j)$ for~$j \in J_2^-$, since  we assume that~$J_2^- \subset J_*$. This assumption also yields that~$\sigma_{j_2}=\sigma_*$ for all~$j_2 \in J_2^-$. Therefore,  using Theorem~\ref{thm2}, we have
\bgn{ \label{50a}
\pp_{\beta+t\sigma_*}[\kA_j] = p_j^-(t) + \bigo(\tau_*+\tau^-_* )\quad  \text{for all~$j\in J_2^-$,}\\
 \label{50b}
\pp_{\beta+t\sigma_*}[\kA_j] = \bigo(\sigma_*/\sigma_j) \quad  \text{for all~$j\in J_1^-\setminus J_2^-$,}\\
 \label{50c}
\pp_{\beta+t\sigma_*}\bcls{\cap_{j\in J_1^-}\kA_j^c} \leq \exp(-cn\sigma_*),
}
where~$c$ is a positive constant and 
\be{
\tau_*=(\log n)^{2m_*+1}/(n\sigma_*), \quad \tau^-_* = \max_{j \in J_1^-\setminus J_2^-} \sigma_*/\sigma_j.
}
 In addition,  Theorem~\ref{thm3} yields that for any~$j \in J$,
\ben{ \label{51}
	\E_{\beta+t\sigma_j}\bclc{(X/n-a_j)^2\given \kA_j} =\bigo(1/(n\sigma_j)^2),
}
and
\besn{ \label{52}
	\mel\dw \bclr{\law_{\pp_{\beta + t\sigma_*}}\clr{\sigma_j(X-na_j)\given \kA_j}, \law(Y_j(t))}\\ &=\bigo(1/(n\sigma_j)) + \bigo(\sigma_*/\sigma_j \I[j \not \in J_*]).
}
  We remark that here and below the notation~$\bigo$   depends on~$\norm{B}_{\infty}=|t|\norm{f}_{\infty}$ and~$\norm{A}_{\infty}$. 
Let $\lambda_- =\min_{j \in J} f(a_j)$. Then $\lambda_-=f(a_j)$ for all~$j \in J_1^-$, 
and therefore
\besn{ \label{53}
u(\beta+t\sigma_*) - \lambda_- &= \E_{\beta+t\sigma_*}\bclc{f(X/n)-\lambda_-} \\
&= \sum_{j \in J_1^-}  \E_{\beta+t\sigma_*}\bclc{f(X/n)-f(a_j)\given \kA_j} \pp_{\beta + t\sigma_*}[\kA_j] \\
& \qquad \quad + \E_{\beta+t\sigma_*} \bclc{ (f(X/n)-\lambda_-)\I\bcls{\cap_{j \in J_1^-} \kA_j^c}}.
}
For~$j \in J$, by Taylor's expansion, 
\bes{
\mel\E_{\beta+t\sigma_*}\clc{f(X/n)-f(a_j)\given \kA_j} \\
& =  \E_{\beta+t\sigma_*}\bclc{ f'(a_j) \sigma_k(X-na_j)\given \kA_j}/(n\sigma_j)
 + \bigo(1)\E_{\beta+t\sigma_*}\bclc{(X/n-a_j)^2 \given \kA_j}.
}
In addition, by \eqref{52},
\bes{
\mel\E_{\beta+t\sigma_*}\bclc{ f'(a_j) \sigma_j(X-na_j)\given \kA_j} \\
& =f'(a_j)\E Y_j(t) 
  + \bigo(1/(n\sigma_j)) + \bigo(\sigma_*/\sigma_j \I[j \not \in J_*]).
}
The last two estimates and \eqref{51} yields that  
\besn{ \label{54}
\mel \E_{\beta+t\sigma_*}\clc{f(X/n)-f(a_j)\given \kA_j} \\
&= f'(a_j) \E Y_j(t) /(n\sigma_j)
+ \bigo(1/(n\sigma_j)^2) +\bigo(\sigma_*/n \sigma^2_j \I[j \not \in J_*]).
}
Combining this with \eqref{50a} and the fact that~$\sigma_j=\sigma_*$ for all~$j\in J_2^-$, and~$J_2^-\subset J_*$, we obtain that
\bes{
\mel\sum_{j \in J_2^-}  \E_{\beta+t\sigma_*}\clc{f(X/n)-f(a_j)\given \kA_j} \pp_{\beta + t\sigma_*}[\kA_j]\\
& =(n\sigma_{*})^{-1} \sum_{k \in J_2^-} f'(a_j) \E Y_j(t) p_j^-(t) + \bigo((\tau_*+\tau^-_*)/n\sigma_*).
}
Using \eqref{50b} and \eqref{54},  we have
\bes{
\mel\sum_{j \in J_1^-\setminus J_2^-} \E_{\beta+t\sigma_*}\clc{f(X/n)-f(a_j)\given \kA_j} \pp_{\beta + t\sigma_*}[\kA_j] \\
&= \bigo(1)\sum_{j \in J_1^-\setminus J_2^-} \sigma_{*}/n\sigma_j^2 
 = \bigo(\tau_*^-/n\sigma_*),
}
and by \eqref{50c} 
\be{
\E_{\beta+t\sigma_*} \bclc{ (f(X/n)-\lambda_-)\I\bcls{\cap_{j \in J_1^-} \kA_j^c}} \leq \exp(-cn\sigma_*/2).
}
It follows from  the last three display equations and  \eqref{53}  that 
\ben{ \label{55}
n\sigma_*(u(\beta+t\sigma_*)- \lambda_-) =  e_-(t)  +\bigo(\tau_*+\tau^-_*),
}
where
\ben{ \label{56}
e_-(t)=\sum_{j \in J_2^-} f'(a_j) \E Y_j(t) p_j^-(t). 
}
Note that 
\be{
e_-(t) =\frac{\sum_{j \in J_2^-} \int_{\IR} f'(a_j) x \exp(c_jx^{2m_j}+tf'(a_j)x) dx }{\sum_{j \in J_2^-} \int_{\IR}  \exp(c_jx^{2m_j}+tf'(a_j)x) dx }.
}
Moreover,  if~$f'(a_j) \neq 0$ by changing variable~$y=tf'(a_j)x$,
\bes{
\mel\int_{\IR} f'(a_j) x \exp(c_jx^{2m_j}+tf'(a_j)x) dx\\
& = \frac{\mathop{\mathrm{sgn}}(t f'(a_j))}{t^2 f'(a_j)} \int_{\IR}y  \exp\left(c_j y^{2m_j}/(tf'(a_j))^{2m_j}+y\right) dy  <0,
}
since~$t<0$ and~$\int_{\IR} y\exp(cy^{2m}+y)dy>0$ for all~$c<0$ and~$m \in \IN$.  In addition, by the assumption \eqref{49}  there exists~$j \in J_2^-$ such that~$f'(a_j) \neq 0$. Thus by the two above display equations, we have 
\be{
e_-(t) \in (-\infty, 0)
}
is a negative and finite constant. 

\paragraph{Part 2.} We proceed to compute~$\pp_{\beta}[f(X/n) \leq u(\beta+t\sigma_*)]$. In the application of Theorem~\ref{thm1}, the measure~$\pp_{\beta}$ corresponds to the case~$B\equiv 0$, or~$J_1=J$ and~$J_2=J_*$. Hence, by Theorem~\ref{thm1}, we have
\be{
	X/n \stackrel{\pp_{\beta}}{\longto} \sum_{j \in J_*} p_j \delta_{a_j},
}
where for~$i$ and~$j$ in~$J$,
\be{
	p_j = \frac{q_j e^{\nu_j} }{\sum_{i \in J_*} q_i e^{\nu_i}}, \quad q_i= \int_{\IR} \exp(c_ix^{2m_i}) dx, \quad \nu_i = \log \sqrt{\frac{1}{(1-a_i)a_i}}.
}
Moreover, by Theorem~\ref{thm2},
\ben{ \label{57}
	\pp_{\beta}[\kA_j] = p_j +\bigo(\tau_*+ \tau_*') \quad \text{for all~$j \in J_*$,}
	\qquad \pp_{\beta}\left[\cap_{j\in J_*} \kA_j^c\right]  =\bigo(\tau'_*),
}
where~$\tau'_*=\max_{j \in J \setminus J_*} \sigma_*/\sigma_j$. By Theorem~\ref{thm3}, 
\ben{ \label{58}
\E_{\beta}\bclc{(X/n-a_j)^2 \given \kA_j}=\bigo\bclr{(n\sigma_j)^{-2}},
}
and
\ben{ \label{59}
\dw\bclr{\law_{\pp_{\beta}}\clr{\sigma_j(X-na_j)\given \kA_j}, \law(Y_j)} =\bigo\bclr{(n\sigma_j)^{-1}},
}
where~$Y_j=Y_j(0) \propto \exp(c_jx^{2m_j})$. It follows from \eqref{57} that
\besn{  \label{60}
\mel\pp_{\beta}[f(X/n) \leq u(\beta+t\sigma_*)] \\
&= \sum_{j \in J_*} \pp_{\beta}[f(X/n) \leq u(\beta+t\sigma_*) \given \kA_j] p_j
+ \bigo(\tau_*+\tau'_*).
}
By Lemma~\ref{lem2} (ii) and \eqref{59}
\besn{ \label{61}
	\dk(\law_{\pp_{\beta}}\clr{\sigma_j(X-na_j)\given \kA_j}, \law(Y_j))  &\leq \dw(\law_{\pp_{\beta}}\clr{\sigma_j(X-na_j)\given \kA_j}, \law(Y_j)) ^{1/2} \\
	& = \bigo((n\sigma_j)^{-1/2}).
}
In particular, for all~$\delta >0$
\bes{
\mel\sup_{s \in \IR}\pp_{\beta }\bcls{s \leq f'(a_j)\sigma_j(X-na_j) \leq s+\delta\given A_j} \\
& \leq \sup_{s \in \IR} \pp_{\beta }\bcls{s \leq f'(a_j)Y_j \leq s+\delta} + \bigo((n\sigma_j)^{-1/2}) = \bigo(\delta) + \bigo((n\sigma_j)^{-1/2}),
}
since~$Y_j$ has the bounded density. Using  the inequality that~$|f(x)-f(a)-  f'(a)(x-a)| \leq \norm{f}_{\infty}(x-a)^2/2$ and Lemma~\ref{lem2}$(i)$, and the above estimate, we have
\bes{
	\mel\dk \bclr{\law_{\pp_{\beta }}\cls{n\sigma_j(f(X/n)-f(a_j))\given \kA_j}, \law(f'(a_j)\sigma_j(X-na_j)\given \kA_j)}  \\
	&\leq \inf_{\delta>0} \Big(\sup_{s \in \IR}\pp_{\beta }[s \leq f'(a_j)\sigma_j(X-na_j) \leq s+\delta\given \kA_j] \\
	& \qquad \qquad +\pp_{\beta}[\norm{f''}_{\infty}(n\sigma_j(X/n-a_j)^2) \geq 2 \delta \given \kA_j]\Big) \\
	& = \bigo(1)\inf_{\delta >0} \bclc{ \delta + \pp_{\beta}\bcls{\norm{f''}_{\infty}(n\sigma_j(X/n-a_j)^2) \geq 2 \delta \given \kA_j} } + \bigo\bclr{(n\sigma_j)^{-1/2}}.
}
Moreover, by Markov's inequality and \eqref{58}
\bes{
	\mel\pp_{\beta}\bcls{\norm{f''}_{\infty}(n\sigma_j(X/n-a_j)^2) \geq 2 \delta \given \kA_j} 	\\
	& =\bigo(1)  \IE\bclc{n\sigma_j(X/n-a_j)^2 \given \kA_j}/\delta
	=\bigo\bclr{(\delta n\sigma_j)^{-1}}.
}
Combining the two above estimates and taking~$\delta=(n\sigma_j)^{-1/2}$, we obtain
\bm{
\dk \bclr{\law_{\pp_{\beta }}\clr{n\sigma_j(f(X/n)-f(a_j))\given \kA_j}, \law\clr{f'(a_j)\sigma_j(X-na_j)\given \kA_j}}\\
 = \bigo((n\sigma_j)^{-1/2}),
}
which together with  \eqref{61} implies that   for all~$j \in J$
\ben{ \label{62}
	\dk \bclr{\law_{\pp_{\beta }}\clr{n\sigma_j(f(X/n)-f(a_j))\given \kA_j}, \law(f'(a_j)Y_j)} =\bigo((n\sigma_j)^{-1/2}).
}
If~$j \in J_* \setminus J_1^-$  then  by the definition of~$J_1^-$, we have  $f(a_j) > \lambda_-$. Hence, by~\eqref{55}, 
\be{
u(\beta +t\sigma_{*}) =\lambda_-+o(1) \leq (f(a_j) + \lambda_-)/2.
}
Thus
\bes{
	\mel\pp_{\beta}[f(X/n) \leq u(\beta+t\sigma_*) \given \kA_j]  \\
	& \leq   \pp_{\beta}[f(X/n) \leq (\lambda_- + f(a_j))/2  \given \kA_j]  \\
	& = \pp_{\beta}[ n\sigma_j(f(X/n)-f(a_j)) \leq n\sigma_j(\lambda_--f(a_j))/2  \given \kA_j]  \\
	& \leq \dk \bclr{\law_{\pp_{\beta }}\clr{n\sigma_j(f(X/n)-f(a_j))\given \kA_j}, \law(f'(a_j)Y_j)}  \\
	& \quad +\pp[f'(a_j)Y_j \leq n\sigma_j(\lambda_--f(a_j))/2]   
	= \bigo((n\sigma_j)^{-1/2}), 
}
by using \eqref{62} and the following estimate 
\be{
\pp[f'(a_j)Y_j \leq n\sigma_j(\lambda_--f(a_j))/4] \leq \exp(-c(n\sigma_j)^2),
}
for some~$c>0$, since~$Y_j \propto \exp(c_jx^{2m_j}+b_jx)$ with~$c_j<0$, and~$\lambda_- < f(a_j)$. 

Next, assume that~$j \in J_* \cap J_1^-$.  Then~$\sigma_j=\sigma_*$ and~$f(a_j)=\lambda_-$. Therefore,
\bes{
  \mel\pp_{\beta}[f(X/n) \leq u(\beta+t\sigma_*) \given \kA_j] \\
& = \pp_{\beta}\bcls{n\sigma_j(f(X/n)-f(a_j)) \leq n\sigma_*(u(\beta+t\sigma_*) -\lambda_-) \given \kA_j}.
}
Combining this with \eqref{62} yields that 
\besn{ \label{63}
\mel\pp_{\beta}\bcls{f(X/n) \leq u(\beta+t\sigma_*) \given \kA_j}\\ &=  \pp \bigl[f'(a_j) Y_j \leq n\sigma_*(u(\beta+t\sigma_*) -\lambda_-)\bigr]  + \bigo(1/(n\sigma_*)^{1/2}).
}
Recall that by \eqref{55}
\be{
n\sigma_*(u(\beta +t\sigma_*)- \lambda_-) =  e_-(t) + \bigo(\tau_*+\tau_*^-), 
}
where~$ e_-(t) \in (-\infty, 0)$ is given in \eqref{56}. 
Hence, if~$f'(a_j)=0$ then 
\ben{ \label{64}
\pp[f'(a_j) Y_j \leq n\sigma_*(u(\beta+t\sigma_*) -\lambda_-)] =0.
}
 If~$f'(a_j) \neq 0$, since~$Y_j$ has the symmetric law with bounded density,
\bes{
\mel\pp[f'(a_j) Y_j \leq n\sigma_*(u(\beta+t\sigma_*) -\lambda_-)]\\ &=\pp\left[ Y_j \leq n\sigma_*(u(\beta+t\sigma_*) -\lambda_-)/f'(a_j)\right]\\
&=\pp \left[Y_j \leq e_-(t)/f'(a_j) +\bigo(\tau_*+\tau^-_*) \right]\\
& =\pp \left[Y_j \leq e_-(t)/f'(a_j)  \right] +\bigo(\tau_*+\tau^-_*).
}
Combining this with \eqref{63}, we obtain that if~$f'(a_j) \neq 0$ then
\besn{ \label{65}
	\mel\pp_{\beta}[f(X/n) \leq u(\beta+t\sigma_*) \given \kA_j] \\ 
  &= \pp \left[Y_j \leq e_-(t)/f'(a_j)  \right] 
	  +\bigo(\tau_*+\tau_*^-)+ \bigo(1/(n\sigma_{*})^{1/2}). 
}

\paragraph{Part 3.} We now combine the results from Parts~1 and~2. Using \eqref{56}, \eqref{60}, \eqref{64} and \eqref{65} we have for any fixed negative real number~$t$, 
\bes{
\mel\pp[(\hbe-\beta)/\sigma_* \leq t] \\
&=\pp_{\beta}[f(X/n) \leq u(\beta+t\sigma_*)]\\
& =\sum_{j \in J_*} \pp_{\beta}[f(X/n) \leq u(\beta+t\sigma_*) \given \kA_j] p_j  +  \bigo(\tau_*+\tau'_*)\\
& = \sum_{j \in  J_2^-}  \pp \left[ Y_j \leq \sum_{k \in J_2^-} \frac{f'(a_k)}{f'(a_j)}p_k^-(t)\E Y_k(t)   \right] p_j \I[f'(a_j)\neq 0] \\
& \qquad + \bigo((n\sigma_*)^{-1/2})+ \bigo(\tau_*+\tau'_*).
}
Note here that~$\tau^-_* \leq \tau'_*$. Similarly, for~$t>0$
\bes{
	\mel\pp[(\hbe-\beta)/\sigma_* > t] \\
	&= \sum_{j \in J_2^+} \pp \left[ Y_j > \sum_{k \in J_2^+} \frac{f'(a_k)}{f'(a_j)} p_k^+(t)\E Y_k(t)  \right]p_j \I[f'(a_j)\neq 0] \\
	& \qquad +  \bigo((n\sigma_*)^{-1/2}) + \bigo(\tau_*+\tau'_*),
}
where for~$k \in J_2^+$
\be{
	p_k^+(t) =\frac{q_k(t)}{\sum_{i \in J_2^+}q_i(t)}, \qquad q_i(t) =\int_{\IR} \exp(c_ix^{2m_i}+tb_ix) dx.
}
We recall that the term~$\bigo$  depends on~$t$, $\norm{f}_{\infty}$ and~$\norm{g}_{\infty}$. Hence,  for any fixed real number~$t\neq 0$, there is a positive constant~$C=C(t)$, such that for all~$n$ sufficiently large
\be{
|\pp [(\hbe-\beta)/\sigma_* \leq t]-\pp[U \leq t]|  \leq C[(n\sigma_*)^{-1/2} + \theta_- +\theta_+] =\lito(1),
}
where~$U$ has the distribution as 
\ban{
\bs{\label{66a}
	&\pp[U \leq t]\\
  &\quad= \sum_{j \in J_2^-}p_j \I[f'(a_j) \neq 0] \pp\bbbcls{ Y_j \leq  \sum_{k \in J_2^- } \frac{f'(a_k)}{f'(a_j)} p_k^-(t) \E Y_k(t) },\quad t < 0,
}\\[1ex]
\bs{\label{66b}
	&\pp[U > t] \\
  &\quad= \sum_{j \in J_2^+ }p_j \I[f'(a_j) \neq 0] \pp\bbbcls{ Y_j >  \sum_{k \in J_2^+ } \frac{f'(a_k)}{f'(a_j)} p_k^+(t)  \E Y_k(t) },\quad t>0,
}\\[1ex]
\bs{\label{66c}
	&\pp[U =0] \\
  &\quad= 1-\frac{1}{2} \sum_{j \in (J_2^+\cup J_2^-)} p_j \I[f'(a_j)\neq 0].
}
}
Note that the value~$\pp[U=0]=1-\pp[U<0]-\pp[U>0]$ is obtained as follows. Letting~$t\rightarrow 0^+$ and~$t\rightarrow 0^-$ in the formulas of~$\pp[U\leq t]$ and~$\pp[U > t]$, since~$\E Y_k(0)  =0$ and~$\pp[Y_j \leq 0]=1/2$,  we have
\bes{
\pp[U<0] = \frac{1}{2} \sum_{ j \in J_2^- } p_j \I[f'(a_j)\neq 0], \qquad \pp [U>0] = \frac{1}{2} \sum_{ j \in J_2^+ } p_j \I[f'(a_j)\neq 0].  
}
We finally conclude that 
\be{
(\hbe-\beta)/\sigma_* \stackrel{\law}{\longto} U,
}
and finish the proof of Theorem~\ref{48}.
\end{proof}
\begin{remark} We consider some special cases. If~$|J|=1$ then~$J_2^+=J_2^-=J$, and we denote by~$a_*$ the unique maximizer  and assume that~$f'(a_*) \neq 0$. In this case, the distribution of~$U$ is as follows. For all~$t \in \IR$,
	\be{
\pp[U \leq t] =\pp[Y \leq \E Y(t)],
  }
where, by denoting~$m_*$ the order of regularity of~$a_*$,
\be{
Y =Y(0), \qquad Y(t) \propto \exp(c_*x^{2m_*}+tf'(a_*)x), \quad c_* =\frac{A^{2m_*}(a_*)}{(2m_*)!} < 0.
}
Note that if~$m_*=1$ then~$Y(t)\sim N\bclr{tf'(a_*)/2|c_*|,1/2|c_*|}$, and  we can compute
\be{
U = N\left(0, 2|c_*|/f'(a_*)^2\right).
} 
Next, consider the case all the maximizers have the same order of regularity, i.e.~$m_j=m_*$ for all~$j \in J$. Then~$J_2^-=J_1^-=J_-=\{j\in J: f(a_j) =\min_{k \in J} f(a_k) \}$, and~$J_2^+=J_1^+=J_+=\{j\in J: f(a_j) =\max_{k \in J} f(a_k) \}$, and we assume that there exist~$j \in J_-$ and~$k \in J_+$ such that~$f'(a_j)f'(a_k) \neq 0$. The law of~$U$ is given as in \eqref{66a}--\eqref{66c} when replacing~$J_2^-$ and~$J_2^+$ by~$J_-$ and~$J_+$.

\noindent Finally, we consider the case~$m_j=1$ for all~$j \in J$, and
\ba{
	&c_j=c_k =c_-,  \quad f'(a_j)=f'(a_k) =d_- \quad \text{for all~$k,j \in J_{-}$,}\\
	&c_j=c_k =c_+,  \quad f'(a_j)=f'(a_k) =d_+ \quad \text{for all~$k,j \in J_{+}$.}
}
Then for~$j \in J_-$ and~$t<0$, we have~$p_j^-(t)=1/|J_-|$, and $Y_j(t) \sim N(\tfrac{td_-}{2|c_-|},\tfrac{1}{2|c_-|})$. Therefore, for~$t \in \R_-$,
\be{
	\IP[U\leq t] =p_{-}\IP\left[N \left(0,\tsfrac{1}{2|c_-|}\right) \leq \tsfrac{td_-}{2|c_-|}\right] =p_{-} \IP\left[N \left(0,\tsfrac{2|c_-|}{d_-^2}\right) \leq t\right],
}
where
\be{
	p_{-} =\sum_{j \in J_{-}}p_j \I[f'(a_j) \neq 0].
}
Similarly for~$t \in \R_+$,
\be{
	\IP[U > t] =\IP\left[N \left(0,\tsfrac{2|c_+|}{d_+^2}\right) > t\right]p_{+}, \qquad p_{+} =\sum_{j \in J_{+}}p_j \I[f'(a_j) \neq 0].
}
Thus 
\be{
	U = \tsfrac{p_{-}}{2} N^{-} \left(0,\tsfrac{2|c_-|}{d_-^2}\right) + \tsfrac{p_{+}}{2} N^{+} \left(0,\tsfrac{2|c_+|}{d_+^2}\right) + \left(1-\tsfrac{p_{-}+p_{+}}{2}\right) \delta_0,
}
where recall that $N^{-}(0,\sigma^2)$ (resp. $N^{+}(0,\sigma^2)$) is negative (resp. positive) half-normal distribution. 
\end{remark}

\section{Some examples} \label{sec:exp}

In this section, we apply Theorems~\ref{thm1}--\ref{thm3}, and~\ref{thm4} to several mean-field mixed spin models, including the homogeneous $p$-spin interaction model, and the three-spin, four-spin, and six-spin interaction models, as well as the annealed Ising model on random regular graphs. We will demonstrate that the mixed spin models exhibit a rich phase diagram for the scaling limits of magnetization. The divergence of fluctuations in these models arises from the complex structure of the maximizers of the associated function $A$. These maximizers may be unique or multiple, with the same or different orders of regularity, as described in Assumption~\textnormal{\ref{A1}}.  The annealed Ising model on regular graphs presents an interesting case where the leading term in the Hamiltonian does not take the exact form $f(\-\omega_+)$  as in mixed spin models, but instead appears in an approximate form  $f_n(\-\omega_+)$, where  $(f_n)_{n\geq 1}$  a sequence converging to a smooth function.  

Before delving into the specific models, we summarize a few points regarding presentation:

$\bullet$  We translate our results for $X_n$ to the magnetization $M_n=\omega_1 + \ldots+ \omega_n$ via the relation  $M_n=2X_n-n$.

$\bullet$ In the statements that follow, when we say that the magnetization is concentrated around points $(z_i)_{i=1}^k$, we are referring to Theorems~\ref{thm1} and~\ref{thm2}. When we say that the (conditional) central limit theorems hold, we are referring to Theorem~\ref{thm3}.

$\bullet$ A maximizer~$a_*$ of a smooth function~$A$ is said to be $2m$-regular (with~$m \in \IN$) if~$A^{(k)}(a_*) =0$ for~$k=1, \ldots, 2m-1$ and~$A^{(2m)}(a_*)<0$.

\subsection{Mean field mixed spin models}
Given~$\bob=(\beta_1,\ldots,\beta_k)\in \IR^k$ and~$\bop=(p_1,\ldots,p_k) \in \IN^k$, the mean-field mixed spin model is defined by the following Hamiltonian:
\be{
	H_n(\omega)= \sum_{j=1}^k\frac{\beta_j}{n^{p_j-1}} \sum_{1 \leq i_1, \ldots, i_{p_j} \leq n} \omega_{i_1} \ldots \omega_{i_{p_j}}  = n f_{\bop,\bob}(\-\omega_+), 
}
where
	\be{ 
		f_{\bop,\bob}(a)=\sum_{i=1}^k\beta_i(2a-1)^{p_i}.
	}
 As explained in Remark \ref{rem:a4}, this Hamiltonian satisfies the conditions \textnormal{\ref{A1}--\ref{A4}}. Hence, we can apply our theorems  to this model. The remaining  task is to analyze the maximizers of the associated function 
 $$A(a)=f_{\bop,\bob}(a)+I(a),$$ 
 and check the non-degeneracy condition of~$f_{\beta_i}(a)=(2a-1)^{p_i}$ at these points. Understanding these maximizers for general case of $\bob$ and $\bop$ is highly non-trivial and  warrants independent research. We aim to investigate some particular cases. First, we consider the case where  only the $p$ spin interactions are allowed. This model was proposed and studied in \cite{MSB1, MSB2}. Second, we analyze the cubic model where  two-spin and three-spin interactions are mixed. This model has been investigated in \cite{CMO}. Third, we propose an interesting four-spin interaction model where the phase diagram for scaling of magnetization becomes complex, ranging from $n^{1/2}$, $n^{1/4}$ to $n^{1/6}$.   Finally, we offer an example  where the associated function $A$ has two maximizers with different  orders of regularity. Consequently, the conditional limit theorems at the maximizers occur at different scales.

\subsubsection{$p$-spin Curie-Weiss model} We consider the homogeneous $p$-spin interaction with Hamiltonian 
 
\ben{ \label{67}
H_n(\omega)=  n f_{\beta,h}(\-\omega_+), \qquad 	f_{\beta,h}(a)=\beta(2a-1)^p +h(2a-1),
}
where $\beta >0$ and $h \in \R$ are parameters.  
\cite{MSB1} have fully characterized the maximizers of the function~$A=f_{\beta,h}+I$ by showing that the parameter space~$(\beta,h) \in \R_+ \times  \R$ is partitioned into disjoint regions:
\begin{itemize}
	\item [$\bullet$] regular region~$R_1=\{(\beta,h): A \textrm{ has an unique maximizer } a_* \in (0,1) \}$ (in this case~$a_*$ is~$2$-regular);
	\item[$\bullet$]~$p$-critical curve~$R_2=\{(\beta,h): A \textrm{ has multiple maximizers  } 0<a_1< \ldots<a_k<1\}$ (in this case all the maximizers are~$2$-regular);
	\item[$\bullet$]~$p$-special points~$R_3=\{(\beta,h): A \textrm{ has an unique maximizer }a_* \in (0,1)$, $A''(a_*)=0 \}$ (in this case~$a_*$ is~$4$-regular).
\end{itemize} 
We refer the reader to Appendix B of \cite{MSB1} for a complete picture of the partition~$(R_1,R_2,R_3)$.

Now, given the additional parameters~$(\bb,\bh)$, \cite{MSB2} considered the perturbed Hamiltonians 
\ba{
	H_n^r(\omega) &= n f_{\beta,h} (\-\omega_+) +\sqrt{n} B(\-\omega_+)  \\
	H_n^s(\omega) &= n f_{\beta,h}(\-\omega_+)+ n^{1/4}B(\-\omega_+), 
}
where 
\be{
B(a)=f_{\bb,\bh}(a), \quad  a  \in [0,1],
}
with~$f_{\bb,\bh}$ defined as in \eqref{67}.  Denoting the corresponding Gibbs measures by~$\mu_n^r$ and~$\mu_n^s$ and using Theorems~\ref{thm1} and~\ref{thm3},    we obtain the following result.

\begin{theorem} Consider the magnetization under the perturbed measures~$\mu_n^r$ and~$\mu^s_n$. 
	\begin{itemize}
		\item [(i)] If~$(\beta,h) \in R_1$ then under $\mu_n^r$,     
		the magnetization is concentrated around $2a_*-1$. Moreover, the  central limit theorem  holds with Weierstrass distance $\bigo(1/\sqrt{n})$.
		\item [(ii)] If~$(\beta,h) \in R_2$ then under $\mu_n^r$, the magnetization is concentrated around the  points $(2a_i-1)_{i=1}^k$. Moreover, the conditional central limit theorems around these points hold with Weierstrass distance $\bigo(1/\sqrt{n})$.
		\item [(iii)] If~$(\beta,h)\in R_3$ then under~$\mu^s_n$,
		\be{
			\dw\clr{W_n, Y}= \bigo(n^{-1/4}), \qquad 	W_n= \frac{M_n-n(2a_*-1)}{n^{3/4}}, 
		}
		where
		\be{
			Y  \propto \exp\Big(\frac{c_*x^4}{16} +\frac{b_* x}{2} \Big), \quad c_*=\frac{A^{(4)}(a_*)}{24}, \quad b_*=B'(a_*).
		}
	\end{itemize}
\end{theorem}
The above theorem covers  Theorem 2.1 of \cite{MSB1} (the main result in this paper) and  Theorem 3.1 of \cite{MSB2} (the key result leading to the maximum likelihood estimators). 

Now we aim to  apply Theorem~\ref{thm4} to find the  scaling limits of MLEs. First, we have to check the non-degeneracy condition in \eqref{49}.  Observe that this condition is always true for the parameter~$h$, since the corresponding  function~$f_h(a)=2a-1$ is not degenerated at any~$a\in [0,1]$. However, that  condition for~$\beta$ does not hold when~$\beta \leq \tilde{\beta}_p$ and~$h=0$, where~$\tilde{\beta}_p=\sup\{ \beta \geq 0: \sup_{a \in [0,1]} A(a) =0 \}$. In fact, in this case~$a=1/2$ is a maximizer of~$A$ that belongs to the set~$J_-$, and the corresponding function~$f_{\beta}(a)= (2a-1)^p$ is degenerated at this point.  In summary, we have the following.

\begin{theorem} Consider the maximum likelihood estimators of the~$p$-spin Curie-Weiss model denoted by~$\hbe_n$ and~$\hat{h}_n$.
	\begin{itemize}
		\item [(ia)] If~$(\beta, h) \in R_1$, then 
		\be{			\sqrt{n}(\hat{h}_n-h) \overset{\law}{\longto} N(0,\sigma_{h}),
		}
		with~$\sigma_{h}$ a positive constant.
		\item[(ib)] If~$(\beta, h) \in R_1 \setminus \{(\beta,0): \beta \leq   \tilde{\beta}_p\}$, then
		\be{
			\sqrt{n}(\hbe_n-\beta) \overset{\law}{\longto} N(0,\sigma_{\beta}), 
		}
		with~$\sigma_{\beta}$ a positive constant.
		\item [(iia)] If~$(\beta, h) \in R_2$, then 
		\be{
			\sqrt{n}(\hat{h}_n-h) \overset{\law}{\longto} U_h, 
		}
		where
		\be{
			U_h= p^-_h  N^-(0,\sigma_h^-) + p_h^+ N^+(0,\sigma_h^+) + (1-p^-_h-p_h^+) \delta_0,	
		}
		with~$p^{\pm}_h, \sigma^{\pm}_h$   positive constants. 
		\item [(iib)] If~$(\beta, h) \in R_2 \setminus \{(\tilde{\beta}_p,0)\}$, then 
		\be{
			\sqrt{n}(\hat{\beta}_n-\beta) \overset{\law}{\longto} U_\beta, 
		}
		where
		\be{
			U_{\beta}= p^-_\beta  N^-(0,\sigma_\beta^-) + p_\beta^+ N^+(0,\sigma_\beta^+) + (1-p^-_\beta-p_\beta^+) \delta_0,	
		}
		with~$p^{\pm}_\beta, \sigma^{\pm}_\beta$   positive constants. 
   	\item [(iii)] If~$(\beta,h)\in R_3$, then 
		\be{
			n^{3/4}(\hbe_n-\beta) \overset{\law}{\longto}Z_{\beta}, \qquad 
			n^{3/4}(\hat{h}_n-h) \overset{\law}{\longto}Z_h,
		}
		where for~$\nu \in \{ \beta, h\}$ the random variable~$Z_{\nu}$ has the distribution  
		\be{
			\IP[Z_{\nu} \leq t] = \pp[Y_{\gamma}(0) \leq \E Y_{\nu}(t)],
		}
		where 
		\be{
		 Y_{\nu}(t) \propto  \exp\bclr{c_*y^{4}+tf_{\nu}'(a_*)y},
		}
		with~$c_*= A^{(4)}(a_*)/24$, and~$f_{\beta}(a)=(2a-1)^p$ and~$f_{h}(a)=(2a-1)$.
	\end{itemize}
\end{theorem}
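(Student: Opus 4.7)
The plan is to invoke Theorem~\ref{thm4} in each region $R_i$ and for each parameter separately. Writing $H_n(\omega) = n(\beta f(\bar\omega_+) + g(\bar\omega_+))$, for the MLE of $h$ I would take $f(a) = 2a-1$ and $g(a) = \beta(2a-1)^p$; for the MLE of $\beta$, $f(a) = (2a-1)^p$ and $g(a) = h(2a-1)$. Both are polynomial, so the required smoothness $f,g \in C^{2m_*+1}([0,1])$ is trivially met.

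The maximizer structure follows from the classification of \cite{MSB1}. In $R_1$ the unique maximizer is $2$-regular, so $m_* = 1$ and $J = J_* = J_2^\pm = \{*\}$; in $R_2$ all maximizers are $2$-regular, hence $m_* = 1$, $J_* = J$, and $J_2^\pm = J_1^\pm$ consist of the maximizers on which $f$ attains its min/max; in $R_3$ the unique maximizer is $4$-regular, so $m_* = 2$ and again all sets are singletons. In every case $(J_2^-\cup J_2^+)\subset J_*$, as Theorem~\ref{thm4} requires.

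The substantive content is the non-degeneracy check \eqref{49}. For the parameter $h$ we have $f'(a)\equiv 2$, so \eqref{49} holds unconditionally, yielding (Ia), (IIa), and the $Z_h$-component of (III). For $\beta$, $f'(a) = 2p(2a-1)^{p-1}$ vanishes only at $a=1/2$, so \eqref{49} can fail only when $a=1/2$ exhausts either $J_2^+$ or $J_2^-$. Using the symmetry of $A$ along $h=0$ together with the defining property of $\tilde\beta_p$, I would identify this degenerate set with $\{(\beta,0): \beta \leq \tilde\beta_p\}$ in $R_1$ (where $a=1/2$ is the sole maximizer) and with the single point $(\tilde\beta_p,0)$ in $R_2$ (where $1/2$ coexists with symmetric off-$1/2$ maximizers but simultaneously realizes $J_2^-$ and $J_2^+$ by symmetry of $f_\beta$). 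These match the exclusions in (Ib) and (IIb).

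The final step is to specialize \eqref{66a}--\eqref{66c}. In the singleton cases (Ia), (Ib), and both halves of (III), the sums collapse to a single term; for $m_* = 1$ the remark after Theorem~\ref{thm4} immediately gives the Gaussian with variance $2|c_*|/f_\gamma'(a_*)^2$, while for $m_* = 2$ it gives the stated law of $Z_\gamma$. In the $R_2$ cases, all $m_j$ equal $1$, so $Y_j(t) \sim N(tf'(a_j)/(2|c_j|),\,1/(2|c_j|))$, and \eqref{66a}--\eqref{66c} directly produce the claimed half-normal-plus-atom mixture, from which one reads off the constants $p_\gamma^\pm$ and $\sigma_\gamma^\pm$. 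The principal obstacle I foresee is the sharp identification of the non-degeneracy-failure set for $\beta$ with $\{(\beta,0): \beta \leq \tilde\beta_p\}$, which requires tracing through \cite{MSB1}'s analytic description of the maximizers and confirming that $\tilde\beta_p$ is exactly the threshold at which $a=1/2$ ceases to be the only relevant element of $J_2^\pm$.
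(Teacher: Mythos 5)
Your proposal matches the paper's own (largely implicit) argument: the paper likewise obtains this theorem by applying Theorem~\ref{thm4} with $f=f_h$ or $f=f_\beta$ and $g$ the remaining part of the Hamiltonian, importing the maximizer classification $(R_1,R_2,R_3)$ from \cite{MSB1}, observing that the non-degeneracy condition \eqref{49} holds automatically for $h$ but fails for $\beta$ exactly on $\{(\beta,0):\beta\le\tilde\beta_p\}$ because $f_\beta'(1/2)=0$ and $a=1/2$ then exhausts the relevant $J_2^{\pm}$, and finally specializing \eqref{66a}--\eqref{66c} via the remark following Theorem~\ref{thm4}. No substantive difference in route or in the level of detail supplied for the degeneracy-set identification.
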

Note that in (iib), all the points $(\beta, 0)$ with $\beta < \tilde{\beta}_p$ are not in $R_2$ (in fact, these points are in $R_1$).  The above result covers Theorems 2.2--2.7 of \cite{MSB2}, except for the estimator~$\hbe_n$ when~$h=0$ and~$\beta \leq \tilde{\beta}_p$, which is corresponding to the results (2.19), (2.22) and (2.26) in this paper.

\subsubsection{Cubic mean field Ising model}
 \cite{CMO} consider a model combining three-spin  and two-spin  interactions as follows:
\ben{ \label{hcub}
	f_{\beta,h}(a)=\beta(2a-1)^3 +h(2a-1)^2, 
}
where $\beta >0$ and $h \in \R$ are parameters. The complete phase diagram of the model (or equivalently the maximizers of the associated function $A=f_{\beta,h}+I$)  has been shown in \cite[Proposition 2.2]{CMO}. More precisely, there exists a  curve parameterized by a function $g$, say $\gamma= \{(\beta,h): h=g(\beta), \beta >0\}$   such that 
\begin{itemize}
	\item [$\bullet$] if $(\beta, h) \in \R_+ \times \R \setminus \gamma $ then $A$ has an unique maximizer  $a_* \in (0,1)$ satisfying $a_* \neq 1/2$  and $a_*$ is~$2$-regular;
	\item[$\bullet$] if $(\beta, h) \in \gamma$ then $A$ has two  maximizers  $1/2=a_-<a_+<1 $ which are all $2$-regular.
\end{itemize} 
Notice  that   $g(\beta) \rightarrow 1/2$ as $\beta \rightarrow 0$, and the model turns to be  the standard critical Curie-Weiss model. In this case,   $A$ has the unique   maximizer  $a_*=1/2 $ which is $4$-regular and the fluctuation of model has been well known.

\begin{theorem} Consider the magnetization of the cubic mean field model. 
	\begin{itemize}
		\item [(i)] If~$(\beta,h) \in \R_+ \times \R \setminus \gamma$  the magnetization is concentrated around $2a_*-1$. Moreover, the  central limit theorem  holds with Weierstrass distance $\bigo(1/\sqrt{n})$. 
		\item [(ii)] If~$(\beta, h) \in \gamma$ then
		the magnetization is concentrated around two points $2a_-1$ and $2a_+-1$. Moreover, the conditional central limit theorems around these points hold with Weierstrass distance $\bigo(1/\sqrt{n})$. 
	\end{itemize}
\end{theorem}

Next,  we consider the fluctuation of MLEs. Since the functions $f_{\beta}(a)=(2a-1)^3$ and $f_h(a)=(2a-1)^2$ are degenerated at  $a=1/2$, Theorem \ref{thm4} is not applicable for the case $(\beta,h) \in \gamma$, where $1/2$ is a maximizer. For the remaining case, we have the Gaussian fluctuation as follows.

\begin{theorem} Consider the maximum likelihood estimators of the cubic mean field  model denoted by~$\hbe_n$ and~$\hat{h}_n$. If $(\beta,h)\in  \R_+ \times \R \setminus \gamma $ then 
\be{
\sqrt{n}(\hat{\beta}_n-h) \overset{\law}{\longto} N(0,\sigma_{\beta}), \qquad 
\sqrt{n}(\hat{h}_n-h) \overset{\law}{\longto} N(0,\sigma_{h}),
		}
		where~ $\sigma_\beta$ and $\sigma_{h}$  are positive constants.	
\end{theorem}

\subsubsection{Four-spin interaction mean field model}
We consider a model that incorporates a mixture of four-spin and two-spin interactions as follows
\ben{ \label{hcub}
	f_{\beta,h}(a)=\beta(2a-1)^4 +h(2a-1)^2, 
}
where $\beta >0$ and $h \in \R$ are parameters. The phase diagram of the model (or of the associated function $A(a)=f_{\beta,h}(a)+I(a)$) is given by the following proposition. See  Figure \ref{phase4} for an illustration. 
\begin{proposition} \label{prop:fours}
There exists a curve  parameterized by a function $g$, say $\gamma= \{(\beta,h): h=g(\beta), \beta >0\}$ satisfying $g(\beta)=1/2$ for all $\beta \leq 1/12$ and  
\begin{itemize}
       \item[$(i)$] if $(\beta, h) \in R_1:=\{(\beta,h): h<g(\beta) \}$ then $A$ has the unique   maximizer $1/2$, which is $2$-regular;
	\item [$(ii)$] if $(\beta, h) \in R_2:=\{(\beta,h): h>g(\beta) \}$ then $A$ has  two  maximizers  $a_\pm$ satisfying $0<a_-<1/2<a_+<1$, and both are  $2$-regular;  
     \item[$(iii)$] if $(\beta, h) \in R_3: =\{(\beta,h): h=g(\beta), \beta >1/12 \} $ then $A$ has three   maximizers $1/2, a_\pm$  satisfying $0<a_-<1/2<a_+<1$, and  all are $2$-regular;
     \item [$(iv)$] if $(\beta, h) \in R_4:= \{(\beta,h): 0<\beta < 1/12, h =1/2\} $ then $A$ has the unique maximizer  $0$, which is $4$-regular;
     \item[$(v)$] if $(\beta, h) =(1/12,1/2)$ then $A$ the unique maximizer $0$ which is $6$-regular.
\end{itemize}  
\end{proposition}
The proof of Proposition \ref{prop:fours} is given in Appendix.

\begin{figure}[h]  
\begin{center}
\includegraphics[scale=0.32]{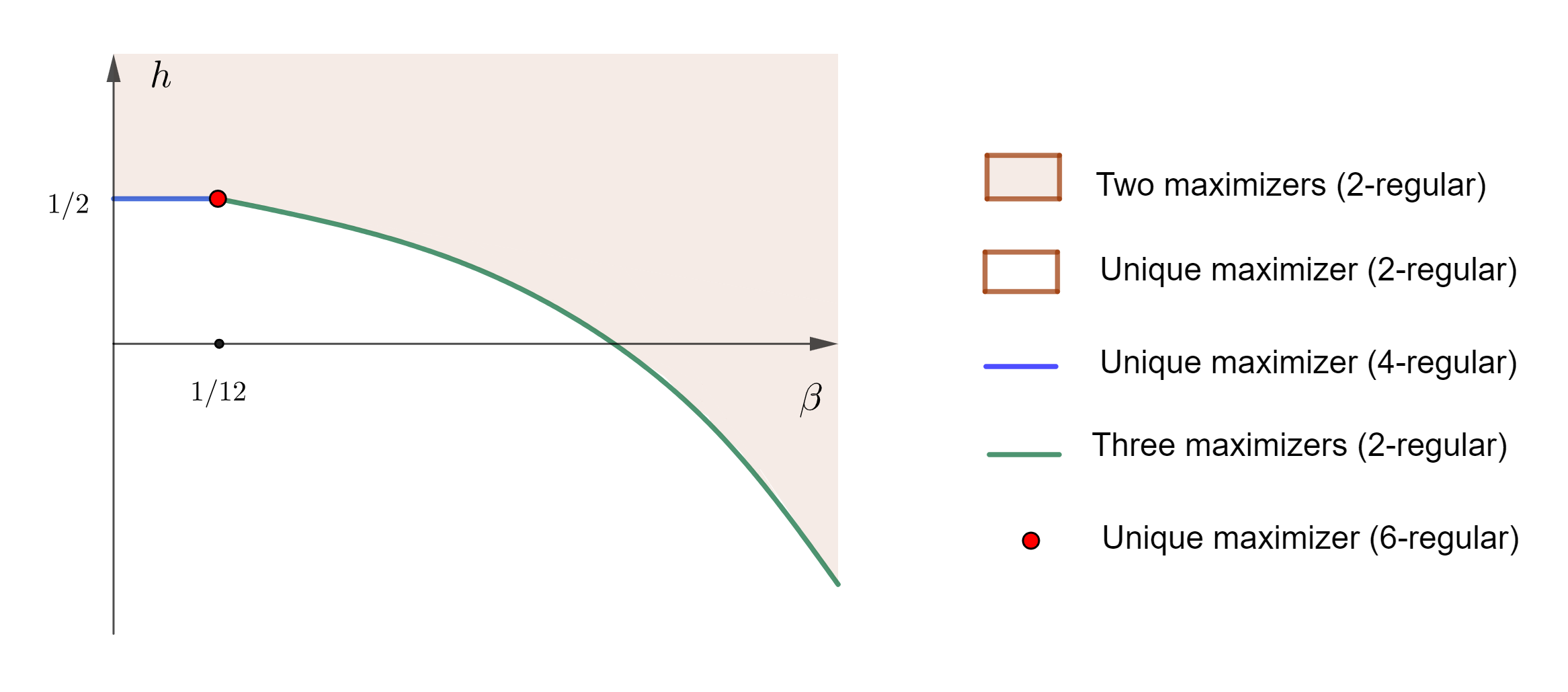}
\end{center}
\caption{Phase diagram of four-spin model} 
\label{phase4}
\end{figure}

\begin{theorem} Consider the magnetization of the four spin interaction mean field model. 
\begin{itemize}
       \item [$(i)$] If $(\beta, h) \in R_1$, the magnetization is concentrated around $0$. Moreover, the  central limit theorem  holds with Weierstrass distance $\bigo(1/\sqrt{n})$. 
	\item [$(ii)$] If $(\beta, h) \in R_2$, the magnetization is concentrated around two points $2a_--1$ and $2a_+-1$. Moreover, the conditional central limit theorems around these points hold with Weierstrass distance $\bigo(1/\sqrt{n})$.   
	\item[$(iii)$] If $(\beta, h) \in R_3$,  the magnetization is concentrated around three points $2a_-1$, $0$ and $2a_+-1$. Moreover, the conditional central limit theorems around these points hold with Weierstrass distance $\bigo(1/\sqrt{n})$.   
     \item[$(iv)$] If $(\beta, h) \in R_4$ then
     \be{
			\dw\clr{n^{-3/4}M_n, Y}= \bigo(n^{-1/4}), \qquad Y  \propto \exp\Big(\frac{A^{(4)}(0)x^4}{2^44!} \Big).
		}		
     \item [$(v)$] If $(\beta, h) =(1/12,1/2) $  then 
     \be{
			\dw\clr{n^{-5/6}M_n, Y}= \bigo(n^{-1/4}), \qquad Y  \propto \exp\Big(\frac{A^{(6)}(0)x^6}{2^66!} \Big).
		}	
\end{itemize} 
\end{theorem}

Next,  we consider the fluctuation of MLEs. Since the functions $f_{\beta}(a)=(2a-1)^4$ and $f_h(a)=(2a-1)^2$ are degenerated at  $a=1/2$, Theorem \ref{thm4} is   applicable for the case (ii) where $1/2$ is not a maximizer.

\begin{theorem} Consider the maximum likelihood estimators of the cubic mean field  model denoted by~$\hbe_n$ and~$\hat{h}_n$. If $(\beta,h)\in  R_2$ then

		\be{
		\sqrt{n}(\hat{\beta}_n-\beta) \overset{\law}{\longto} U_\beta
  \qquad
  \sqrt{n}(\hat{h}_n-h) \overset{\law}{\longto} U_h, 
		}
		where
  \be{
			U_\beta= p^-_\beta  N^-(0,\sigma_\beta^-) + p_\beta^+ N^+(0,\sigma_\beta^+) + (1-p^-_\beta-p_\beta^+) \delta_0,	
		}
		\be{
			U_h= p^-_h  N^-(0,\sigma_h^-) + p_h^+ N^+(0,\sigma_h^+) + (1-p^-_h-p_h^+) \delta_0,	
		}
		with~$p^{\pm}_h, \sigma^{\pm}_h$   $p^{\pm}_\beta, \sigma^{\pm}_\beta$ positive constants.
\end{theorem}

\subsubsection{A six-spin interaction model with varying regularity}
We aim to construct an example of a mixed interaction model, where the magnetization concentrates at two distinct points, each with a different degree of concentration. Specifically, we consider the following six-spin interaction model:
\ben{ \label{sixth}
f_{\beta,h}(a) = \beta a^6+ ha^5+a^2/2. 
}
\begin{theorem} \label{theo:sixth}
Consider the model of six-spin interaction \eqref{sixth} with  $\beta$ and $h$ given in \eqref{dob} and \eqref{doh}. Then the associated function $A=f_{\beta,h}+I$ has two maximizers $1/2$ and $0.95$, where $1/2$ is $4$-regular and $0.95$ is $2$-regular.  As a consequence,
\be{
			M_n/n    \stackrel{\law}{\longto} 0,	\quad M_n=2X_n-n.
		}
		 Moreover, 
		\be{
			\dw \left( \law\clr{n^{-3/4}M_n \given \-\omega_+ \in (\tfrac{1}{2}-\delta, \tfrac{1}{2}+\delta ) },Y \right)= \bigo(n^{-1/4}), \quad Y \propto   	\exp\Big(\frac{A^{(4)}(\tfrac{1}{2})x^4}{2^44!} \Big),
		}
and 

\be{
			\dw \left( \law\clr{W_n \given \-\omega_+ \in (0.95-\delta, 0.95+\delta ) },N \left(0,\tsfrac{4}{|A''(0.95)|} \right) \right)= \bigo(n^{-1/2}),
		}
  where $\delta$ is a small constant and
		\be{
			W_{n}= \frac{M_n-0.9 n}{\sqrt{n}}.
		}
\end{theorem}
\begin{figure} 
\begin{center}
\includegraphics[scale=0.7]{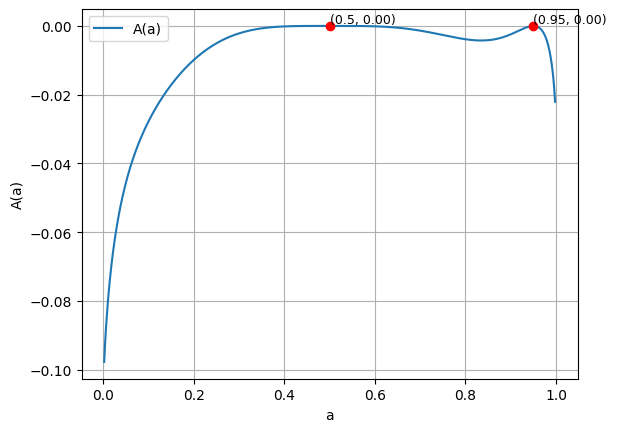}
\end{center}
\caption{Plot of the function $A$ in a six-spin model}
\label{sixspin}
\end{figure}
The detailed analysis of maximizers of $A$  is put in Appendix and the plot of $A$ is given in Figure \ref{sixspin}.

\subsection{Annealed Ising model on random  regular graphs}
Let~$G_n=(V_n, E_n)$ be the random regular graph of degree~$d \geq 3$ with~$n$ vertices~$V_n=\{v_1,\ldots,v_n\}$. The  Gibbs measure of annealed Ising model is defined as follows. For~$\omega \in \{1,-1\}^n$,
$$\mu_n(\omega) \propto \IE\clc{\exp(H_n(\omega))}, \qquad H_n(\omega) = \beta \sum_{(v_i,v_j) \in E_n} \omega_i \omega_j + h \sum_{i=1}^n \omega_i,$$
where expectation is taken over the space of random regular graphs with respect to a uniform distribution. \cite[Eq. (3.2) and Lemma 2.1]{C1} proved that if~$\-\omega_+=k/n$ then
\be{
\mu_n(\omega) \propto \exp(2hk)g(\beta, dk, dn),
}
where~$\{g(\beta, m, l)\}_{m \leq l}$ satisfies that 
\bgn{ \label{68}
	\babs{l^{-1} \log g(\beta, m, l)  -g_{\beta}(m/l)} =\bigo(1/l),\\
	\babs{\bclr{l^{-1}\log g(\beta, m, l)  -g_{\beta}(m/l)}-\bclr{l^{-1} \log g(\beta, k, l)  -g_{\beta}(k/l) }} = \bigo\bclr{|k-m|/l^2},  \notag 
}
with 
\be{
	g_{\beta}(a)=\int \limits_0^{a\wedge(1-a)} \frac{e^{-2\beta}(1-2s)+\sqrt{1+(e^{-4\beta}-1)(1-2s)^2}}{2(1-s)} ds.
}
Therefore, with~$X_n=n \-\omega_+$, we have
\be{
	\mu_n(X_n=k) \propto \exp(nA_n(k/n))
}
with 
\be{
	A_n\big(k/n\big) = 2h k/n + \frac{1}{n} \log  g(\beta, dk, dn) + \frac{1}{n} \log \binom{n}{k}.
}
By \eqref{68} the function~$A_n$ is well approximated by~$A: [0, 1] \rightarrow \R$ given as
\be{
	A(a) = 2h a+  dg_{\beta}(a) +I(a).
}
In particular,  we can find positive constants~$\eps_*$, $\delta_*$ and~$C_*$ such that the conditions \textnormal{\ref{A1}--\ref{A3}} hold. \cite[Claim 1*]{C1} and \cite[Lemma 2.2]{C2} showed that 
\begin{itemize}
	\item [$\bullet$] if~$(\beta,h) \in \kU=\{(\beta, h): \beta >0, h\neq 0, \textrm{or } 0< \beta <\beta_c, h=0\}$ then~$A$ has a unique $2$-regular maximizer~$a_*\in (0,1)$;
	\item[$\bullet$] if~$\beta >\beta_c$ and~$h=0$ then~$A$ has two $2$-regular maximizers~$0<a_-<a_+=1-a_-<1$;
	\item[$\bullet$] if~$\beta =\beta_c$ and~$h=0$ then~$A$ has the unique $4$-regular maximizer~$a_*=1/2$.
\end{itemize}
Here~$\beta_c$ is the critical value of the model~$\beta_c=\mathop{\mathrm{atanh}}(1/(d-1))$. We now verify~\ref{A4} for the case~$(ii)$.  Since~$h=0$, the model is symmetric and thus~$\mu_n(\omega) =\mu_n(-\omega)$ and 
\ben{ \label{69}
	\mu_n(X_n=k) = \mu_n(X_n=n-k).
}
Letting~$k_-=[na_-]$ and~$k_+=[na_+]$, we aim to  show 
\ben{ \label{70}
	|A_n(k_-/n)-A_n(k_+/n)| =\bigo(n^{-3/2}).
}
Indeed, using  \eqref{69} and~\ref{A3}
\bes{
	\mel\bigr|A_n(k_-/n)-A_n(k_+/n)\bigl| \\
  & =\bigr|A_n((n-k_-)/n)-A_n(k_+/n)\bigl|\\
	&=\bigr|A((n-k_-)/n)-A(k_+/n)\bigl|+\bigo(|n-k_--k_+|/n^2)\\
	&=\bigo\bclr{((n-k_-)/n-a_+)^2}+ \bigo\bclr{(k_+/n-a_+)^2} + \bigo\bclr{|n-k_--k_+|/n^2}\\
	&=\bigo\clr{n^{-2}}.
}
Here, for the third line, we used Taylor expansion at~$a_+$ and~$A'(a_+)=0$, and for the last one, we used~$k_{\pm}=[na_{\pm}]$ and~$a_-+a_+=1$. Therefore, \eqref{70} holds when~$h=0$ and~$\beta> \beta_c$. 

In conclusion, all the conditions \textnormal{\ref{A1}--\ref{A4}} hold, and thus using Theorems~\ref{thm1}  and~$M_n=2X_n-n$, we have the following. 
\begin{theorem} Consider the annealed Ising model on a random regular graph.
	\begin{itemize}
		\item [(i)] If~$(\beta,h) \in \kU$, the magnetization is concentrated around $2a_*-1$. Moreover, the  central limit theorem  holds with Weierstrass distance $\bigo(1/\sqrt{n})$. 
		\item [(ii)] If~$\beta> \beta_c$ and $h=0$ then 
		the magnetization is concentrated around two points $2a_--1$ and $2a_+-1$. Moreover, the conditional central limit theorems around these points hold with Weierstrass distance $\bigo(1/\sqrt{n})$.
		\item [(iii)] If~$\beta= \beta_c$ and $h=0$ then 
		\be{
			\dw\bclr{n^{-3/4}M_n, Y}=\bigo(n^{-1/4}), \qquad 	Y \propto   	\exp\Big(\frac{A^{(4)}(0)x^4}{2^44!} \Big). 
		}
		\end{itemize}	
\end{theorem}
Parts $(i)$ and $(ii)$ are the main results of \cite[Theorem 1.3]{C1} and Part~$(iii)$ is the main result of \cite[Theorem 1.3]{C2} with a convergence rate. The model is not linear in~$\beta$ but linear in~$h$, and hence we can also prove the following.
\begin{theorem} Consider the maximum likelihood estimator~$\hat{h}_n$ of the annealed Ising model on random regular graphs. 
\begin{itemize}
	\item [\rm{(i)}] If~$(\beta, h) \in \kU$ then 
	\be{
		\sqrt{n}(\hat{h}_n-h) \overset{\law}{\longto} N(0,\sigma_{h}),
	}
	with~$\sigma_{h}$ a positive constant.
	\item [\rm{(ii)}] If~$\beta > \beta_c$ and $h=0$ then 
  \be{
	\sqrt{n}(\hat{h}_n-h) \overset{\law}{\longto} U_h, 
}
where
\be{
	U_h= p^-_h  N^-(0,\sigma_h^-) + p_h^+ N^+(0,\sigma_h^+) + (1-p^-_h-p_h^+) \delta_0,	
}
with~$p^{\pm}_h, \sigma^{\pm}_h$   positive constants. 
	\item [\rm{(iii)}] If~$\beta= \beta_c$, $h =0$ then 
	\be{
		n^{3/4}(\hat{h}_n-h) \overset{\law}{\longto}Z_h, 
	}
	where~$Z_h$ has the distribution  as
	\be{
		\IP[Z_h \leq t] =\pp[Y_h(0) \leq \E[Y_h(t)]]
	}
	with~$Y_{h}(t) \propto  \exp(c_*y^{4}+2ty)$ and~$c_*=A^{(4)}(1/2)/24$. 
\end{itemize}
\end{theorem}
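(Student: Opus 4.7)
The plan is to check that the MLE~$\hat h_n$ for the parameter~$h$ in the annealed Ising model falls squarely into the linear-model framework of Theorem~\ref{thm4} (with the role of~$\beta$ in that theorem played by~$h$), and then to read off each of the three cases from the remark following Theorem~\ref{thm4}.

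First I would set up the identification. Writing~$H_n(\omega)=\beta\sum_{(v_i,v_j)\in E_n}\omega_i\omega_j + h(2X_n-n)$ and integrating out the graph, the induced law of~$X_n$ is
\be{
\mu_n(X_n=k)\propto\exp(nA_n(k/n)),\qquad A_n(k/n)=2h(k/n)+\tfrac1n\log g(\beta,dk,dn)+\tfrac1n\log\tbinom nk,
}
which fits Theorem~\ref{thm4} with the identifications~$\beta\!\to\! h$, $f(a)=2a$, and~$g(a)=dg_\beta(a)$, giving the limiting functional~$A(a)=2ha+dg_\beta(a)+I(a)$. Crucially, $f'(a)\equiv 2\neq 0$, so the non-degeneracy condition~\eqref{49} is automatic at every maximizer.

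Next I would verify assumptions~\ref{A1}--\ref{A4}. The maximizer structure of~$A$ in each of the three regimes is exactly the content of items~$(i)$--$(iii)$ cited from \cite{C1,C2}, giving~\ref{A1} with~$m_*=1$ in cases~(I)--(II) and~$m_*=2$ in case~(III). Assumptions~\ref{A2} and~\ref{A3} follow from the uniform approximation~\eqref{68} together with the smoothness of~$g_\beta$ and~$I$ away from the endpoints (this is essentially the same verification already used for the magnetization theorem). Assumption~\ref{A4} is vacuous when~$J=J_2$ is a singleton (cases~(I) and~(III)); for case~(II) with~$h=0$ it is exactly the estimate~\eqref{70}, which is proved using the symmetry~$\mu_n(\omega)=\mu_n(-\omega)$ and Taylor expansion at~$a_\pm$.

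Then I would specialize Theorem~\ref{thm4}.  In case~(I), $|J|=1$ and~$m_*=1$, and the first display of the remark after Theorem~\ref{thm4} gives directly a centred normal limit with variance~$2|c_*|/f'(a_*)^2 = |A''(a_*)|/4$, yielding~$\sigma_h = |A''(a_*)|/4$. In case~(II), all maximizers have common regularity~$m_j=1$, the symmetry~$a_+=1-a_-$ forces~$c_+=c_-$ and~$f'(a_\pm)=2$, and~$J_-=\{a_-\}$, $J_+=\{a_+\}$ are singletons; the third display of the remark then produces the claimed mixture of signed half-normals plus a point mass at~$0$, with the weights~$p_h^\pm$ obtained from the~$p_j$ of Theorem~\ref{thm1}. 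In case~(III), $|J|=1$ and~$m_*=2$; the first display of the remark gives~$\pp[Z_h\leq t]=\pp[Y_h(0)\leq \E Y_h(t)]$ with~$Y_h(t)\propto\exp(c_*y^{4}+2ty)$ and~$c_*=A^{(4)}(1/2)/24$, and the scaling becomes~$n^{1-1/(2m_*)}=n^{3/4}$.

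I expect the only part requiring actual work is the verification of~\ref{A4} in case~(II); every other step is bookkeeping against already-established results. That verification is however essentially the computation~\eqref{70} above, so the present proof reduces to citing it together with Theorem~\ref{thm4} and its remark.
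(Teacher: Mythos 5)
Your proposal is correct and follows essentially the same route as the paper, which presents this theorem as a direct consequence of Theorem~\ref{thm4} (and the remark following it) once the model is recognized as linear in~$h$ with~$f(a)=2a$, hence~$f'\equiv 2\neq 0$, and once \ref{A1}--\ref{A4} are verified via \eqref{68} and the symmetry computation \eqref{70}. Your case-by-case specialization (normal limit for a unique $2$-regular maximizer, half-normal mixture with an atom at~$0$ under the symmetric two-maximizer phase, and the quartic law with $n^{3/4}$ scaling at criticality) matches the paper's intended argument.
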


\section*{Appendix}
\subsection*{Proof of Proposition \ref{prop:fours}}
We analyze the maximizers of $A(a)=\beta(2a-1)^4+h(2a-1)^2+I(a)$ for $a \in [0,1]$. For the convenience, we change the variable $t=2a-1$ and the function $A$ turns to be $F: [-1,1] \rightarrow \R$ given by
\[
F(t)= \beta t^4 + ht^2 + E(t), \qquad E(t)= -\frac{1+t}{2} \log (1+t) - \frac{1-t}{2} \log (1-t).
\]
We observe that
\ben{ \label{fpo}
F(t)= t^2(h-g_\beta(t)), \qquad  F'(t)=2t(h-\hat{g}_\beta(t)),
}
where 
\ben{ \label{gbt}
 g_\beta(t) =-\frac{E(t)}{t^2}- \beta t^2, \qquad 
\hat{g}_\beta(t) =\frac{\atanh(t)}{2t}-2 \beta t^2.
}
We define for $\beta >0$: 
\[
g(\beta) = \inf_{s \in [-1,1]} g_\beta(s), \qquad \hat{g}(\beta) = \inf_{s \in [-1,1]} \hat{g}_\beta(s). 
\]
The properties of  $g_\beta$, $\hat{g}_\beta$, $g$ and $\hat{g}$ are summarized as follows.
\begin{lemma} \label{l:gg}
The following assertions hold. 
\begin{itemize}
\item[(i)] If $\beta \leq 1/12$, then $\hat{g}_\beta$  has the unique critical point $0$, which is the minimizer. If $\beta >1/12$, then $g_\beta$ has  a local maximizer $0$, and two  symmetry minimizers denoted by $s_\pm$ satisfying  $q(s_\pm)=\beta$ with 
\ben{ \label{eqq}
q(s)= \frac{1}{8s^2(1-s^2)} - \frac{\atanh(s)}{8s^3}.
}
\item[(ii)] Both functions $g$ and $\hat{g}$ are non-increasing and satisfy $g(\beta)=\hat{g}(\beta)=1/2$ for $\beta \leq 1/12$, and $\hat{g}(\beta) \leq g(\beta)$ for all $\beta >0$.
\item [(iii)] If $h<g(\beta)$ then $0$ is the unique maximizer of $F$. If $\beta >1/12$ and $h> g(\beta)$ then $0$ is not a maximizer of $F$. 
\end{itemize}
\end{lemma}
Assuming the above lemma, we complete the proof of Proposition \ref{prop:fours}.\\ 

\noindent \textbf{Case 1}: $h< g(\beta)$. By Lemma \ref{l:gg}  (iii), $0$ is the unique maximizer of $F$. Moreover, $F''(0)=2h-1<2g(\beta)-1 \leq 0$, using Lemma \ref{l:gg} (ii).  Hence, the maximizer $t=0$ is $2$-regular. \\ 

  \begin{figure}
	\centering
	\subfloat[$\beta >1/12$]{\includegraphics[scale=0.35]{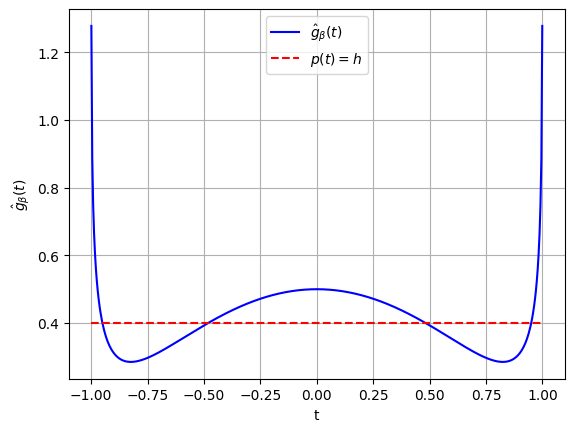}}\hspace{0.5 cm}
	\subfloat[$\beta\leq 1/12$]{\includegraphics[scale=0.35]{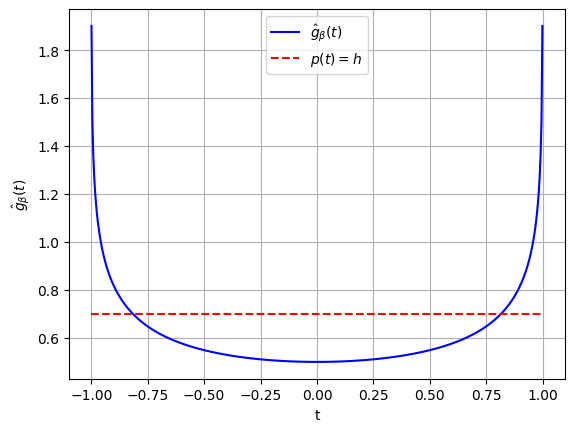}} 
 \caption{Plot of the equation $h=\hat{g}_\beta(t)$}
 \label{fgbt}
	\end{figure}

\noindent \textbf{Case 2}: $h> g(\beta)$. We consider three subcases.

\underline{Case 2a}: $h> 1/2$. The equation  $h=\hat{g}_\beta(t)$ has two symmetry solutions $t_\pm \in (-1,1) \setminus \{0\}$, see Figure \ref{fgbt}  for an illustration.  Hence, $F'(t)=0$ has three solutions $t_\pm$ and $0$. Moreover, since $g_\beta(1) = + \infty$ we have $h-g_\beta(t)<0$ and hence $F'(t)<0$ for all $t>t_+'$. Similarly, $F'(t)>0$ for all $t<t_-'$. Thus $t_\pm$ are local maximizers and $0$ is a local minimizer. Additionally, $F$ is symmetry, and so    $t_\pm$ are actually the  maximizers.  Since $1/2<h=\hat{g}_\beta(t_+)=\atanh(t_+)/(2t_+)-2\beta t_+^2$, we have $\beta t_+^2 \leq \atanh(t_+)/(4t_+)-1/4$. Thus,
   \bea{
     F''(t_+) &=& 12 \beta t_+^2 +2h - \frac{1}{1-t_+^2} = \frac{\atanh(t_+)}{t_+} + 8\beta t_+^2 - \frac{1}{1-t_+^2}\\
     &\leq& 3\frac{\atanh(t_+)}{t_+} -2  - \frac{1}{1-t_+^2}.
     }
      Consider $z(t)=3\atanh(t)-2t -t/(1-t^2)$ for $t \in (0,1)$. We have $z'(t)=-2t^4/(1-t^2)^2<0$. Hence, $z(t_+) < z(0)=0$, so $F''(t_+)=z(t_+)/t_+<0$. We then  have  $F''(t_-)=F''(t_+)<0$. In summary, the maximizers  $t_\pm$ are  $2$-regular.

 \underline{Case 2b}: $1/2=h>g(\beta)$. By Lemma \ref{l:gg} (ii),  $\beta >1/12$, since otherwise $g(\beta)=1/2$. Then the equation $h=\hat{g}_\beta(t)$ has three solutions $t_\pm$ and $0$. Using the same arguments as in Case 2a, we  obtain that $0$ is a local minimizer and  $t_\pm$ are maximizers which are $2$-regular.

\underline{Case 2c}: $1/2>h>g(\beta)$. We also have $\beta >1/12$ and the equation $h=\hat{g}_\beta(t)$ has four solutions $t_\pm$, $t_\pm'$ satisfying $-1<t_-<s_-<t'_-<0<t'_+<s_+<t_+<1$ and $t_\pm=-t_\pm'$, where recall  that $s_\pm$ are minimizers of $\hat{g}_\beta$ and are solutions of equation $q(s)=\beta$ given in \eqref{eqq}, see Figure \ref{fgbt}  for an illustration.  Using similar as for Case 2a, we conclude that  $t_-, t_+$ and $0$ are local  maximizers of $F$. By Lemma \ref{l:gg} (iii), $0$ is not a maximizer. Hence, $t_\pm$ are maximizers of $F$.  Since $h=\hat{g}_\beta(t_+)=\atanh(t_+)/(2t_+)-2\beta t_+^2> \hat{g}_\beta(s_+)$, we have $\beta t_+^2 \leq \atanh(t_+)/(4t_+)-\hat{g}_\beta(s_+)/2$. Thus
 \bea{
     F''(t_+) &=& 12 \beta t_+^2 +2h - \frac{1}{1-t_+^2} = \frac{\atanh(t_+)}{t_+} + 8\beta t_+^2 - \frac{1}{1-t_+^2}\\
     &\leq& 3\frac{\atanh(t_+)}{t_+} - 4 \hat{g}_\beta(s_+)  - \frac{1}{1-t_+^2}.
     }
Set $l(t)= 3\atanh(t)/t-1/(1-t^2)$. Using $\atanh(t) \geq t+t^3/3$, we have 
\bea{
l'(t) &=& \frac{-5 t^3 - 3 (t^2 - 1)^2 \atanh(t) + 3 t}{t^2 (1 - t^2)^2} \\
&\leq& \frac{-5 t^3 - 3 (t^2 - 1)^2 (t+t^3/3) + 3 t}{t^2 (1 - t^2)^2} = -\frac{t^7+t^5}{t^2 (1 - t^2)^2}<0.
}
Therefore, $l(t_+) < l(s_+)$, since $0<s_+<t_+$. Hence,
\bea{
F''(t_+) < l(s_+)-4\hat{g}_\beta(s_+)&=& \frac{\atanh(s_+)}{s_+}-\frac{1}{(1-s_+^2)} + 8 \beta s_+^2\\
&=&8s_+^2(\beta-q(s_+))=0,
}
where we used $q(s_+)=\beta$ and recall the formula of $q$ in \eqref{eqq}. We then have   $F''(t_-)=-F''(t_+)<0$. Therefore, the mazximizers $t_\pm$ are $2$-regular. \\

\noindent \textbf{Case 3}: $h=g(\beta)$. We consider theree sub-cases.

\underline{Case 3a}: $\beta < 1/12$.  Then  $h=g(\beta)=1/2$ and  $0$ is the unique maximizer. We can check that  
\[
f'(0)=f''(0)= f'''(0)=0, f^{(4)}(0)=24 \beta -2<0.
\]
Hence, the maximizer $0$ is  $4$-regular.

\underline{Case 3b}: $\beta =1/12$. Then $h=g(\beta)=1/2$ and $0$ is the unique maximizer. We have 
\[
f'(0)=f''(0)= f'''(0)=f^{(4)}(0)=f^{(5)}(0)=0, f^{(6)}(0)=-24 <0.
\]
Hence, the maximizer $0$ is  $6$-regular.

\underline{Case 3c}: $\beta >1/12$.  For all $\eps>0$ small enough,  $h^\eps=g(\beta)+\eps \in (g(\beta),1/2)$. We denote by  $F^\eps$ for the function $F$ when replacing $h$ by $h^\eps$.  As explained in Case 2,  $F^\eps$ has two symmetry maximizers $t^\eps_\pm$ such that $0<t_-^\eps<s_-<0<s_+<t_+^\eps$ and $F^\eps(t^\eps_\pm)>F^\eps(0)=0$. By the compactness, we can take  sequence $\eps_i \rightarrow 0$ such that $(t_\pm^{\eps_i})_{i\geq 1}$ converge to symmetry limit $t_\pm$ satisfying $0<t_-\leq s_-<0<s_+\leq t_+$.  We have 
\be{
F(t_+)=\lim_{i \rightarrow \infty} F^{\eps_i}(t^{\eps_i}_+) \geq 0, \qquad F(t_-)=\lim_{i \rightarrow \infty} F^{\eps_i}(t^{\eps_i}_-) \geq 0 
}
On the other hand, since $h=g(\beta) \leq g_\beta(s)$ for all $s \in [-1,1]$, one has
\[
F(s)=s^2(h-g_\beta(s)) \leq 0 \qquad \forall \, s \in [-1,1]
\]
Thus $t_-,0, t_+$ are maximizers of $F$. Moreover, the function $F$ has at most five critical points and at most
three local maximizers. Hence, $t_-,0, t_+$ are actually the all  maximizers of $F$. Using similar argument as for Case 2c,  we can also show that $F''(t_+)=F''(t_-)<0$ and $F''(0)<0$.  Hence, the maximizers are $2$-regular. \qed

\begin{proof}[\textbf{Proof of Lemma \ref{l:gg}}]
 Observe that
\ben{
\hat{g}_\beta'(s)=\frac{1}{2s(1-s^2)}-\frac{\atanh(s)}{2s^2} -4 \beta s= 4s(q(s)-\beta), 
}
where  recall that 
\ben{ \label{eqq}
q(s)= \frac{1}{8s^2(1-s^2)} - \frac{\atanh(s)}{8s^3}.
}
Moreover,
\[
q'(s)= \frac{1}{8s^4} \left( \frac{s(5s^2-3)}{(1-s^2)^2} +3 \, \atanh(s) \right).
\]
For all $s>0$, using $\atanh(s) \geq s+s^3/3$, we get
\ben{
q'(s) \geq \frac{1}{8s^4}\left( \frac{s(5s^2-3)}{(1-s^2)^2} +3s+s^3 \right) = \frac{s^3+s}{8(1-s^2)^2} >0.
}
Hence, $q(s)$ is increasing in $(0, \infty)$. Furthermore, 
\be{
q(1) =  \infty, \qquad q(0)=1/12,
}
using $\lim_{s\rightarrow 1}(1-s) \atanh(s)=0$ and  $\atanh(s)=s+s^3/3 + \bigo(s^5)$ as $s\rightarrow 0$.
Therefore,  $q$ is an increasing function on $(0, \infty)$ taking value from $1/12$ to $\infty$.  If $\beta  \leq 1/12$ then the equation $q(s)=\beta$ has no non-zero solution. Hence, $\hat{g}_\beta$   the unique critical point $0$ which is the minimizer. If $\beta >1/12$ the equation $q(s)=\beta$ has two symmetry solutions denoted by $s_\pm$. We can check that $\hat{g}_\beta(s)>0$ for $s>s_+$ and $\hat{g}_\beta(s)<0$ for $s< s_-$. Thus $s_\pm$ are two minimizers of $\hat{g}_\beta$ and $0$ is a local maximizer, see Figure \ref{fgbt} for an illustration.   

Next, we turn to prove (ii). Since the functions $\beta \mapsto g_\beta(s)$ and $\beta \mapsto \hat{g}_\beta(s)$ are decreasing for all fixed $s$,  the functions $g$ and $\hat{g}$ are  non-increasing. By (i), if $\beta \leq 1/12$ then  $\hat{g}(\beta)=\hat{g}_\beta(0)=1/2$. Next, we consider $g(\beta)$. Since $g$ is non-increasing,
\ben{
g(1/12) \leq g(0) \leq g_0(0)=1/2.   
}
Hence, once we can show that $g(1/12) \geq 1/2$, this implies $g(\beta)=1/2$ for all $\beta \leq 1/12$. We have  
\[
g_{1/12}(s) -\frac{1}{2}= -\frac{E(s)}{s^2}- \frac{s^2}{12} - \frac{1}{2}=-\frac{l(s)}{s^2}, \quad l(s)= \frac{s^4}{12}+\frac{s^2}{2}+E(s).
\]
Observe that $l''(s)=s^4/(s^2-1)<0$ for $s \in (0,1)$. Thus $l'(s) \leq l'(0)=0$, so  $l$ is decreasing in $(0,1)$. Therefore, $l(s) \leq l(0)=0$ for all $s \in (0,1)$. Consequently,   $g_{1/12}(s) \geq 1/2$ for all $s \in (0,1)$. This together with the fact that $g$ is symmetry implies that  $g(1/12)=g_{1/12}(0)=1/2$. We conclude that 
$g(\beta) =0$ for all $\beta \leq 1/12$. 

We now prove (iii). If $h<g(\beta)$ then for all $s \in (0,1]$
\be{
F(s)=s^2(h-g_\beta(s)) <s^2(g(\beta)<g_\beta(s)) \leq 0=F(0).
}
Since $F$ is symmetry, the above inequality shows that $0$  is the unique maximizer.  Assume that $h>g(\beta)$ and  $\beta >1/12$ . We have
\ben{
g_\beta'(s)=\frac{(s-2)\log(1-s)-(s+2)\log(s+1)}{2s^3} -2 \beta s= 2s(r(s)-\beta), 
}
where 
\ben{
r(s)= \frac{(s-2)\log(1-s)-(s+2)\log(s+1)}{4s^4}.
}
Notice that $r(1)= \infty$ and  by Taylor expansion, $r(s)=1/12 + s^2/15 + \bigo(s^4)$ as $s\rightarrow 0$. Therefore, for all $\beta >1/12$ the equation $r(s)=\beta$ has at least one solution in $(0,1)$. Moreover, $r$ changes the sign from negative to positive when $s$ crosses the smallest positive solution. Therefore, $g'_\beta$ changes its sign from positive to negative at $s=0$, or $0$ is not the minimizer of $g$. Hence, there exists $s_* \in (0,1)$ such that $g(\beta)=g_\beta(s_*)$. Thus, 
\be{
F(s_*)=s_*^2(h-g_\beta(s_*))=s_*^2(h-g(\beta))>0,
}
using $h>g(\beta)$. Particularly, $0$ is  not a maximizer of $F$. 

Finally, we show that $\hat{g}(\beta) \leq g(\beta)$ for all $\beta >1/12$. Recall from \eqref{fpo} that $F'(s)=2s(h-\hat{g}_\beta(s)$. Hence,  if $h<\hat{g}(\beta)$ then $h-\hat{g}_\beta(s)<0$ for all $s$, and so  $0$ is the unique maximizer of $F$. Moreover, we have shown that if $h>g(\beta)$ and $\beta >1/12$ then $0$ is not a maximizer. Therefore, $\hat{g}(\beta) \leq g(\beta)$.
\end{proof}

\subsection*{Proof of Theorem \ref{theo:sixth}}
Thanks to our main results, we only need to study the maximizers of the associated function $A$. 
With the same arguments and notation as in the proof of Proposition \ref{prop:fours}, we now focus on analyzing the maximizers of $F: [-1,1] \rightarrow \R$ given by
\[
F(t) = \beta t^6+ ht^5+t^2/2+E(t).
\]
Note that the desired maximizers $a=1/2$ and $a=0.95$ now turn to be $t=0$ and $t=0.9$. Set 
\[
t_*=0.9.
\]
We aim to find $\beta,h$ such that $t_*$ and $0$ are maximizers of $F$. Since  $F(0)=0$, it is required  that  $F(t_*)=F'(t_*)=0$, or equivalently
\be{
\beta t_*^6+ h t_*^5+t_*^2/2+E(t_*) = 6 \beta t_*^5+5 h t_*^4+t_*+E'(t_*)=0.
}
Solving these equations, we find 
\ben{ \label{dob}
\beta =\frac{3t_*^2/2+5E(t_*)-t_*E'(t_*)}{t_*^6} \approx 0.0386,
}
\ben{ \label{doh}
h=\frac{-2t_*^2-6E(t_*)+t_*E'(t_*)}{t_*^5} \approx 0.1258.
}
It can be checked directly that $F(0)=F(t_*)=0$ and
\be{
 F'(0)=f''(0)=F'''(0)=F'(t_*)=0, \quad F^{(4)}(0), F''(t_*)<0.
}
Hence, $t=0$ and $t=t_*$ are local maximizers of $f$ with the order of regularity $4$ and $2$ respectively. Observe that for $t \neq 0$,
\be{
F(t)=t^5(h-g(t)), \qquad g(t)= -\beta t -t^{-3}/2 -E(t)t^{-5}.
}
We claim that 
\ben{ \label{gth}
g(t)>h \qquad \forall \, t\in (0,1] \setminus \{t_*\}.
}
Assuming \eqref{gth},  we  have $F(t)<0$ for all $t\in (0,1] \setminus \{0,t_*\}$. Moreover, if $t<0$ then using $g(t)=-g(-t)>h$, one has $F(t)=-(-t)^5(h+g(-t))<0$. In conclusion, 
\ben{
F(t)<0 \quad \forall \, t \in [-1,1] \setminus \{0,t_*\}.
}
Thus $0, t_*$ are actually the maximizers of $F$. Now, it remains to prove \eqref{gth}. We have 
\be{
g'(t)= t^{-6}[-\beta t^6 + 3t^2/2-tE'(t)+5E(t)].
}
Particularly,
\be{
g'(t_*)=0,
}
and 
\ben{ \label{gppt}
g''(t)=t^{-7} l(t), \qquad l(t)=-6t^2-30E(t)+10tE'(t)-t^2E''(t).
}
By direct computation,
\bea{
l'(t)=-12t -20E'(t)+8t E''(t)-t^2E'''(t)
}
and 
\bea{
l''(t)&=& -12 -12E''(t)+6tE'''(t) -t^2E^{(4)}(t)\\
&=& -12 + \frac{30}{1-t^2} - \frac{25}{(1-t^2)^2} + \frac{7}{(1-t^2)^3}\\
&=& (u-1)(7u^2 -18 u +12) >0,
}
where $u=1/(1-t^2)>1$. Therefore, $l'(t)>l'(0)=0$ for all $t>0$ and hence $l(t)>l(0)=0$ for all $t>0$. This together with \eqref{gppt} shows $g''(t)>0$ for all $t>0$ or $g$ is strictly convex  in $(0,1)$. This combining with $g'(t_*)=0$ implies that $t_*$ is the unique minimizer of $g$, or 
\[
g(t) > g(t_*)=h  \quad \forall \, t\in (0,1) \setminus \{t_*\}.
\]
Additionally, $g(1)=\infty$ and hence \eqref{gth} holds. \qed 
\section*{Acknowledgements}

This project was supported by the Singapore Ministry of Education Academic Research Fund Tier 2 grant MOE2018-T2-2-076. The work of Van Hao Can is supported by Vietnam National Foundation for Science and Technology Development (NAFOSTED) under grant number 101.03-2023.34.



\setlength{\bibsep}{0.5ex}
\def\bibfont{\small}

\end{document}